\documentclass[11pt,a4paper,twoside]{amsart}
\usepackage{amssymb,amsmath,amsthm,graphicx,color
}
\theoremstyle{plain}
\newtheorem{theorem}{Theorem}[section]

\newtheorem{lemma}[theorem]{Lemma}

\newtheorem{proposition}[theorem]{Proposition}
\newtheorem{assumption}[theorem]{Assumption}
\theoremstyle{remark}
\newtheorem{remark}[theorem]{Remark}

\allowdisplaybreaks[4]


\usepackage{amsrefs}

\numberwithin{equation}{section}

\newcommand{\C}{\mathbb{C}}
\newcommand{\R}{\mathbb{R}}
\newcommand{\Z}{\mathbb{Z}}

\newcommand{\F}{\mathcal{F}}

\renewcommand{\Im}{\operatorname{Im}}
\renewcommand{\Re}{\operatorname{Re}}
\newcommand{\I}{\infty}
\newcommand{\abs}[1]{\left\lvert #1\right\rvert}
\newcommand{\wha}[1]{\widehat{#1}}

\newcommand{\norm}[1]{\left\lVert #1\right\rVert}
\newcommand{\Lebn}[2]{\left\lVert #1 \right\rVert_{L^{#2}}}
\newcommand{\Sobn}[2]{\left\lVert #1 \right\rVert_{H^{#2}}}

\newcommand{\Jbr}[1]{\left\langle #1 \right\rangle}

\newcommand{\m}[1]{\mathcal{#1}}

\def\Sch{{\mathcal S}} 
\def\({\left(}
\def\){\right)}
\def\<{\left\langle}
\def\>{\right\rangle}
\def\le{\leqslant}
\def\ge{\geqslant}

\def\d{{\partial}}

\def \f{\phi}
\def \e{\varepsilon}

\def \d{\delta}
\def \D{\Delta}

\def \pa{\partial}
\def \n{\nabla}
\def \s{\sigma}
\def \a{\alpha}
\def \b{\beta}

\def \n{\nabla}
\def \t{\theta}

\def \P{\Phi}

\def \et{\eta}

\def \F{\mathcal{F}}
\def \mL{\mathcal{L}}
\def \mN{\mathcal{N}}
\def \mG{\mathcal{G}}
\def \mR{\mathcal{R}}

\newcommand{\g}{\gamma}

\newcommand{\eps}{\varepsilon}

\newcommand{\todayd}{\the\year/\the\month/\the\day}

\theoremstyle{definition}


\begin{document}
\title[Long range scattering for NLS]
{Long range scattering for nonlinear Schr\"odinger equations with critical homogeneous nonlinearity}
\author[S. Masaki]{Satoshi MASAKI}
\address{Division of Mathematical Science, Department of Systems Innovation, Graduate School of Engineering Science, Osaka University, Toyonaka, Osaka, 560-8531, Japan}
\email{masaki@sigmath.es.osaka-u.ac.jp}
\author[H. Miyazaki]{Hayato MIYAZAKI}
\address{Advanced Science Course, Department of Integrated Science and Technology, National Institute of Technology, Tsuyama College, Tsuyama, Okayama, 708-8509, Japan}
\email{miyazaki@tsuyama.kosen-ac.jp}
\keywords{nonlinear Schr{\"o}dinger equations, scattering, modified wave operator, long range scattering, asymptotic behavior}
\subjclass[2010]{35Q55, 35B40, 35P25}
\maketitle
\vskip-5mm
\begin{abstract}

In this paper, we consider the final state problem for the nonlinear Schr\"odinger equation with
a homogeneous nonlinearity which is of the long range critical order and
is not necessarily a polynomial, in one and two space dimensions. 
As the nonlinearity is the critical order, the possible asymptotic behavior depends on the shape of the nonlinearity.
The aim here is to give a sufficient condition on the nonlinearity to construct a modified wave operator.
To deal with a non-polynomial nonlinearity,
we decompose it into a resonant part
and a non-resonant part via the Fourier series expansion.
Our sufficient condition is then given in terms of the Fourier coefficients.
In particular, we need to pay attention to the decay of the Fourier coefficients
since the non-resonant part is an infinite sum in general.

\end{abstract}
\section{Introduction}
This paper is devoted to the study of long time behavior of solutions to
the nonlinear Schr\"odinger equation
\begin{equation}\label{eq:NLS}\tag{NLS}
	i \pa_t u + \Delta u = F(u). 
\end{equation}
Here, $(t,x) \in \R^{1+d}$ ($d=1,2$) and
$u=u(t,x)$ is a $\C$-valued unknown function. 
We suppose that the nonlinearity $F$ is homogeneous of degree $1+2/d$, that is, $F$ satisfies
\begin{equation}\label{eq:cond1}
	F(\lambda u) = \lambda^{1+\frac2d} F(u)
\end{equation}
for any $\lambda >0$ and $u\in \C$. 
Our aim here is to determine the asymptotic behavior of nontrivial small solutions to \eqref{eq:NLS}
with a general homogeneous nonlinearity.
More specifically, we give a sufficient condition on the nonlinearity $F$ to construct a modified wave operator.

A typical example of nonlinearity satisfying \eqref{eq:cond1} is a gauge-invariant power type nonlinearity
\begin{equation}\label{eq:cpower}
	F(u) = \mu |u|^{\frac2d} u,
\end{equation}
where $\mu \in \R \setminus \{0\}$.
As for the nonlinearity of the form $\mu |u|^pu$,
the power $p=2/d$ is known to be a threshold.
The equation \eqref{eq:NLS} with the nonlinearity $|u|^p u$ admits a nontrivial solution asymptotically behaves
like a free solution for large time when $p>2/d$.
However, in the case $p=2/d$, there is no nontrivial solution to the equation \eqref{eq:NLS} with \eqref{eq:cpower} 
belongs to $L^2$ and scatters in $L^2$ (see \cite{B,St,TY}).
It is shown in \cite{Oz,GO} that when the nonlinearity is of the form \eqref{eq:cpower} then
for given \emph{final data} $u_+$ the equation admits a solution which asymptotically behaves like
\begin{equation}\label{eq:masymptotic}
	u_p (t) = (2it)^{-\frac{d}{2}}  e^{i \frac{|x|^2}{4t}} \wha{u_{+}}\(\frac{x}{2t}\) \exp \( -i \mu \left| \wha{u_{+}}\(\frac{x}{2t}\) \right|^{\frac{2}{d}} \log t \)
\end{equation}
as $t\to\I$, where $\wha{u_+}$ is the Fourier transform of $u_+$.

When the nonlinearity is homogeneous of the critical order,
asymptotic behavior of a solution depends on the shape of the nonlinearity.
In \cite{MTT}, it is shown that if the nonlinearity is $F(u)=u^{2/d+1}$ then
\eqref{eq:NLS} admits an asymptotic free solution
which is
a solution behaves like
\begin{equation}\label{eq:fasymptotic}
	u_p= (2it)^{-\frac{d}{2}}  e^{i \frac{|x|^2}{4t}} \wha{u_{+}}\(\frac{x}{2t}\) .
\end{equation}
Remark that this is nothing but the asymptotic behavior of the free solution $e^{it\Delta} u_+$.
Hence, the behavior in this case is similar to the case $|u|^{p}u$ ($p>2/d$).
Let us now introduce the following terminology:
We say a nonlinearity is \emph{short range} if \eqref{eq:NLS} admits a nontrivial solution
behaves like \eqref{eq:fasymptotic},
and is \emph{long range} if \eqref{eq:NLS} admits a nontrivial solution
behaves like \eqref{eq:masymptotic} with a suitable $\mu\in \R \setminus \{0\}$.
It is shown in \cite{ShT,HNST,HWN} that the nonlinearity $\mu |u|^{2/d}u +N_d (u)$ is short range if $\mu=0$
and long range if $\mu\neq0$, where
\[
	N_1(u) = \lambda_1 u^3 + \lambda_2 |u|^2 \overline{u} + \lambda_3 \overline{u}^3 
\]
if $d=1$ and
\[
	N_2(u) = \lambda_1 u^2 + \lambda_2 \overline{u}^2
\]
if $d=2$, $\lambda_j \in \C$, and $\mu \in \R \setminus \{0\}$.
Furthermore, if $\mu\neq0$ then the asymptotic behavior of a solution is given by \eqref{eq:masymptotic}.
Thus, the gauge-invariant term determines the asymptotic behavior.

In this paper, we handle more general nonlinearity satisfying \eqref{eq:cond1} and
give a sufficient condition on nonlinearity to become short range or long range.
A special example in our mind is 
\begin{equation}\label{eq:example1}
	F(u)=|\Re u|\Re u,
\end{equation}
which satisfies \eqref{eq:cond1} for $d=2$.
The nonlinearity appears, for instance, as a main part of a generalized version of Gross-Pitaevskii equation introduced in \cite{MM}.
We restrict our attention to a solution corresponding to a given final data which has very small low-frequency part.
We remark that if a final data has non-negligible low-frequency part then other kinds of asymptotic behavior take place
(see \cite{HN02,HN04,HN08,HN11,NS,HN15,N}).

With the example \eqref{eq:example1}, let us explain the main point of our argument to treat general homogeneous nonlinearity.
To compare with, let us first consider the nonlinearity $F(u)=|\Re u|^2 \Re u$ in $d=1$.
As for the nonlinearity, a simple calculation shows
\[
	|\Re u|^2 \Re u = \(\frac{u+\overline{u}}2\)^3= \frac38 |u|^2 u + \frac18 u^3 + \frac38 |u|^2 \overline{u} 
	+\frac18 \overline{u}^3
\]
Hence, this is of the form $F(u) = (3/8) |u|^2 u + N_1(u)$ and so it is included in
the previous results \cite{ShT,HNST,HWN}.
One sees that the \eqref{eq:NLS} admits a solution asymptotically behaves like \eqref{eq:masymptotic} with $\mu=3/8$.
The term $\frac38 |u|^2 u$ is a resonant term which determines the asymptotic behavior, and
the other terms are non-resonant terms.
We would emphasize that the non-resonant part is a \emph{finite sum}.
On the contrary, the resonant term of \eqref{eq:example1} is not extracted by such a simple calculation.
Hence, we use the Fourier series expansion to obtain
\[
	|\Re u| \Re u = \frac{4}{3\pi} |u| u + \sum_{m\neq 0 } 
	\frac{4(-1)^{m+1}}{\pi (2m-1)(2m+1)(2m+3)}  |u|^{-2m+1}u^{2m+1}.
\]
A remarkable point is that the non-resonant part consists of \emph{infinitely many terms}.
The question now arises whether decay of Fourier coefficients in $n$ is enough to sum up.
One main respect of the present paper is to establish a sufficient condition to handle the non-resonant part.
The condition is given in terms of the Fourier coefficients of the nonlinearity.
It will turn out that $|\Re u|\Re u$ is long range and \eqref{eq:NLS} admits
a solution which has the asymptotic \eqref{eq:masymptotic} with $\mu=4/3\pi$.

To state our result precisely, we introduce some notation.
A homogeneous nonlinearity is written as
\begin{equation}\label{eq:id1}
	F(u) = |u|^{1+\frac2d} F\( \frac{u}{|u|} \).
\end{equation}
We introduce a $2\pi$-periodic function $g(\theta)$ by
\begin{equation}\label{eq:id2}
	g(\theta) = F(e^{i\theta})
\end{equation}
We identify a homogeneous nonlinearity $F$ satisfying \eqref{eq:cond1}
with a $2\pi$-periodic function $g$ through \eqref{eq:id1} and \eqref{eq:id2}.
Namely, given nonlinearity $F$, we give a $2\pi$-periodic function $g(\theta)=g_F(\theta)$ by the above procedure.
Conversely, for a given $2\pi$-periodic function $g$, we can construct a homogeneous nonlinearity $F=F_g:\C \to \C$ by
\[
	F_g(u) =
	\left\{
	\begin{aligned}
	&|u|^{1+\frac2d} g\( \arg u \), &&u\neq 0, \\
	&0 &&u=0.
	\end{aligned}
	\right.
\]
We now apply the Fourier series expansion.
Since $g(\theta)$ is $2\pi$-periodic function, it holds, at least formally, that
$g (\theta) = \sum_{n\in\Z} g_n e^{in\theta}$ with the coefficients
\begin{equation}\label{def:gn}
	 g_n := \frac{1}{2\pi} \int_0^{2\pi} g(\t) e^{-in\t}d\t.
\end{equation}
This expansion gives us
\[
	F(u) = |u|^{\frac2d+1}\sum_{n\in \Z} g_n \(\frac{u}{|u|}\)^n = \sum_{n\in \Z} g_n |u|^{1+\frac2d-n} u^n,
\]
by means of \eqref{eq:id1} and \eqref{eq:id2}.
We then write
\begin{equation}\label{eq:decomp}
	F(u) = g_0 |u|^{\frac2d+1} + g_1 |u|^{\frac2d}u + \sum_{n\neq 0,1} g_n |u|^{1+\frac2d-n} u^n.
\end{equation}
The extraction of a resonant term via Fourier expansion is motivated by \cite{Sun,MS2}.
We also remark that the Fourier coefficients are represented as the integral
$g_n=\frac1{2\pi i} \int_{|z|=1} F(z) \frac{dz}{z^{1+n}}$, some of which are used in previous works such as 
\cite{MR3543568}.

In this paper, we suppose the following.
\begin{assumption}\label{asmp:main}
Assume that $F$ is a homogeneous nonlinearity such that a corresponding $2\pi$-periodic function
$g(\theta)$ satisfies
\[
	g_0 := \frac1{2\pi}\int_{0}^{2\pi} g(\theta) d\theta  =0, \quad g_1 := \frac1{2\pi} \int_{0}^{2\pi} g(\theta)e^{-i\theta} d\theta \in \R,
\]
and $\sum_{n \in \Z}|n|^{1+\eta} |g_n| < \I$
for some $\eta>0$, where $g_n$ is given in \eqref{def:gn}.
In particular, $g$ is Lipschitz continuous.
\end{assumption}

\subsection{Main result}
Set $\Jbr{a}=(1+|a|^2)^{1/2}$ for any $a \in \C$. The weighted Sobolev space on $\R^d$ is defined by $H^{m,s}=\{ u \in \Sch'(\R^d); \; \Jbr{i\n}^{m}\Jbr{x}^s u \in L^2 \}$, and $\dot{H}^{m}=\{ u \in \Sch'(\R^d); \; (-\D)^{\frac{m}2}u  \in L^2 \}$ denotes the homogeneous Sobolev space on $\R^d$, where $m$, $s \in \R$. Let us simply write $H^{m} = H^{m,0}$.
Let $\norm{g}_{\mathrm{Lip}} := \sup_{\t \neq \t'} {|g(\t)-g(\t')|}/{|\t-\t'|}$ be the Lipschitz norm.

Our main result is as follows:

\begin{theorem}\label{thm:main}
Suppose that the nonlinearity $F$ satisfies Assumption \ref{asmp:main} for some $\eta>0$.
Fix $\d \in (d/2,(d+1)/2)$ so that $\delta-d/2<2\eta$.
Let $\gamma=\d/2$ if $d=1$ and $\gamma = (\d+2)/6$ if $d=2$. 
Take $ b \in(d/4, \gamma)$.
Then, there exists $\eps_0=\eps_0(b,\norm{g}_{\mathrm{Lip}})$ such that 
for any $u_+ \in H^{0,d} \cap \dot{H}^{-\d}$ satisfying
 $\Lebn{\wha{u_+}}{\I}<\eps_0$, there exists $T>0$ and a unique solution $u \in C([T,\I); L^2(\R^d))$ of \eqref{eq:NLS} such that 
\[
\sup_{t \in [T, \I)}t^{b} \Lebn{u(t) -u_p(t)}{2} + \sup_{t \in [T, \I)} t^b \( \int_t^{\I} \norm{u(s)-u_p(s)}_{X_d}^4 ds \)^{\frac{1}{4}} < \I,
\]
where
\[
u_p (t) = (2it)^{-\frac{d}{2}}  e^{i \frac{|x|^2}{4t}} \wha{u_{+}}\(\frac{x}{2t}\) \exp \( -i g_1 \left| \wha{u_{+}}\(\frac{x}{2t}\) \right|^{\frac{2}{d}} \log t \),
\]
$X_1 = L^{\I}(\R)$ and $X_2 = L^{4}(\R^2)$.
\end{theorem}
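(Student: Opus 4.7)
The strategy is to seek the solution as $u = u_p + v$ and construct $v$ by contraction on a weighted space so that the decay of $u - u_p$ stated in the theorem is built into the contraction space. Setting $R := (i\pa_t + \Delta)u_p - F(u_p)$, one has $(i\pa_t + \Delta)v = F(u_p + v) - F(u_p) - R$, which via Duhamel backward from $t=\I$ becomes the fixed-point equation $v = \Phi(v)$ with
\[
\Phi(v)(t) := i\int_t^{\I} e^{i(t-s)\Delta}\bigl[ F(u_p(s)+v(s)) - F(u_p(s)) - R(s) \bigr]\, ds.
\]
I would work on the complete metric space
\[
Y_T := \Bigl\{ v \in C([T,\I); L^2(\R^d)) : \norm{v}_{Y_T} \le \rho \Bigr\}, \quad \norm{v}_{Y_T} := \sup_{t \ge T} t^b \Lebn{v(t)}{2} + \sup_{t \ge T} t^b \Bigl( \int_t^{\I} \norm{v(s)}_{X_d}^4\, ds \Bigr)^{1/4}.
\]
The pair $(4, X_d)$ is Strichartz-admissible for both $d = 1, 2$, and the range $b \in (d/4, \gamma)$ is designed to balance the residual decay rate obtained below against the weighted Strichartz inequality for the free propagator.

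\emph{Residual estimate.} Applying $(i\pa_t + \Delta)$ to $u_p$ produces, up to terms of order $t^{-1-d/2}$ involving derivatives of $\wha{u_+}$ that are manifestly harmless, the resonant contribution $g_1|u_p|^{2/d}u_p$ coming from the time-derivative of the logarithmic phase. Expanding $F(u_p)$ via \eqref{eq:decomp}, Assumption \ref{asmp:main} kills the $n=0$ term and the $n=1$ term cancels $g_1|u_p|^{2/d}u_p$ exactly. What remains is the non-resonant sum
\[
\sum_{n\neq 0,1} g_n |u_p(t)|^{1+2/d-n} u_p(t)^n,
\]
each summand of which carries the oscillating phase $e^{i(n-1)|x|^2/(4t)}$. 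I would exploit this oscillation through the factorization $e^{it\Delta} = M(t)D(t)\F M(t)$ of the free propagator, transferring the analysis to the slowly varying profile and then integrating by parts in time against the phase. This yields a gain of $t^{-1}|n-1|^{-1}$ per summand, at the price of a derivative falling either on $\wha{u_+}$ or on $|\wha{u_+}|^{2/d}$. The hypothesis $\sum_n |n|^{1+\eta}|g_n| < \I$ together with the coupling $\delta - d/2 < 2\eta$ is calibrated precisely so that the resulting sum converges in the appropriate dual Strichartz norm near infinity with decay $t^{-1-\sigma}$ for some $\sigma > 0$.

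\emph{Nonlinear closure.} From the homogeneity \eqref{eq:cond1} and the Lipschitz continuity of $g$ (built into Assumption \ref{asmp:main}) one gets the pointwise bound
\[
|F(a+b) - F(a)| \lesssim \norm{g}_{\mathrm{Lip}}\bigl(|a|^{2/d} + |b|^{2/d}\bigr)|b|.
\]
Combined with $\Lebn{u_p(t)}{\I} \lesssim t^{-d/2}\Lebn{\wha{u_+}}{\I}$ and a H\"older-in-time Strichartz estimate, this yields
\[
\norm{\Phi(v)}_{Y_T} \le C T^{-\sigma} + C\bigl(\Lebn{\wha{u_+}}{\I}^{2/d} + \rho^{2/d}\bigr)\rho,
\]
together with an analogous contraction bound on $\Phi(v) - \Phi(v')$. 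Choosing $\rho$ small, then $\eps_0 = \eps_0(b, \norm{g}_{\mathrm{Lip}})$ with $\Lebn{\wha{u_+}}{\I} < \eps_0$, and finally $T$ large, produces a unique fixed point $v \in Y_T$, which gives the desired $u = u_p + v$.

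\emph{Main obstacle.} The crux lies in controlling the non-resonant infinite sum in $R$ uniformly in $n$. A termwise single-frequency integration-by-parts is by now standard, but the inverse-$|n-1|$ factors it produces and the $|n-1|$ derivatives it throws onto the finitely regular profile $\wha{u_+} \in H^{0,d} \cap \dot{H}^{-\delta}$ must be summed against $|g_n|$. The combined assumption $\sum_n |n|^{1+\eta}|g_n| < \I$ and $\delta - d/2 < 2\eta$ is exactly the threshold at which both sums converge and the residual decays fast enough in the dual Strichartz norm to close the contraction; any weakening of these hypotheses would either break summability in $n$ or break summability in the derivative count coming from $\wha{u_+}$.
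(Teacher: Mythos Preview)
Your contraction skeleton matches the paper's exactly: same space $Y_T = X_{d,T,b}$, same Lipschitz bound for the nonlinear difference, same identification of the non-resonant infinite sum as the main obstacle. Two points, however, are genuinely underspecified and would not close as written.

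\textbf{The linear residual.} You assert that $(i\pa_t+\Delta)u_p - g_1|u_p|^{2/d}u_p$ consists of ``terms of order $t^{-1-d/2}$ involving derivatives of $\wha{u_+}$ that are manifestly harmless.'' It does not decay that fast in $L^2$, and the paper does not compute it this way. Instead one writes the integral equation so that this piece appears as $\mathcal{R}(t)\wha{w}$ and $\int_t^\I U(t-s)\mathcal{R}(s)\mathcal{G}_d(\wha{w})\,ds/s$, where $\mathcal{R}(t)=M(t)D(t)(U(-1/(4t))-1)$, and then uses the $H^{0,d}$ regularity of $u_+$ to bound these by $CT^{-\d/2}\Jbr{\log T}^\d$ (this is the content of Lemma~\ref{non1:g1}, taken from \cite{HWN}). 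Since $\gamma\le\d/2$ and $b<\gamma$, this suffices; but ``manifestly harmless'' with decay $t^{-1-d/2}$ is not what happens.

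\textbf{The non-resonant estimate.} Your description ``integration by parts in time gives $t^{-1}|n-1|^{-1}$ at the cost of a derivative on the profile'' is the right heuristic but does not close for small $\eta$. The integration by parts produces the multiplier $A(s)=(1+\tfrac{i(1-1/n)s}{2}|\xi|^2)^{-1}$ and a term with $\tfrac{n}{s^2}\Delta$ acting on the profile; evaluated naively this costs $|n|^d$ in $d$ dimensions, and then $\sum|n|^{1+\eta}|g_n|<\I$ is not enough when $\eta$ is small (in particular for the motivating example $|\Re u|\Re u$, where $\eta<1$). The paper supplies two devices you omit: (i) an interpolation argument (Lemma~\ref{lem1:2}) that upgrades the naive $|n|^d$ to $|n|^\d$ when measuring $\||\wha{w}|^{1+2/d-n}\wha{w}^n\|_{H^\d}$, and (ii) a \emph{time-and-$n$-dependent} regularizing operator $\mathcal{K}(t,n)=\psi(i\nabla/(|n|t^{\sigma/2}))$ that splits each summand into a low-frequency part (on which the integration by parts is performed, with the $\Delta$ now hitting $\mathcal{K}\phi_n$ and returning a factor $|n|t^{\sigma-1}$ instead of costing full regularity) and a high-frequency remainder (estimated directly via $\|\mathcal{K}-1\|_{\dot H^\theta\to L^2}\lesssim (|n|t^{\sigma/2})^{-\theta}$). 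The $n$-dependence of the cutoff scale is precisely what buys the correct power of $|n|$ so that the sum converges under $\sum|n|^{1+\eta}|g_n|<\I$ with $\delta-d/2<2\eta$. Without these two ingredients the derivative bookkeeping you allude to in your ``Main obstacle'' paragraph does not balance.
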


\begin{remark}
Our theorem include the example \eqref{eq:example1} in $d=2$.
The corresponding periodic function is
$g(\theta) = |\cos \theta| \cos \theta$
and so 
\[
g_n = \left\{ 
\begin{aligned}
& - \frac{4}{\pi (n-2)n(n+2)} \sin \frac{\pi n}{2}  && n\text{: odd},\\
& 0 && n \text{: even}.
\end{aligned}
\right.
\]
Remark that it satisfies Assumption \ref{asmp:main} for $0<\eta<1$.
\end{remark}

\begin{remark}
The regularity assumption on the data is similar to that is made in \cite{HNST} and stronger than in \cite{HWN}.
This is because we use regularity of the data to weaken the condition on the nonlinearity.
Hence, if $F$ is a sufficiently good one, for instance if it satisfies Assumption \ref{asmp:main} with $\eta\ge d$,
then the regularity assumption can be taken the same as in \cite{HWN} by their argument.
\end{remark}

\begin{remark}
Theorem \ref{thm:main} implies that \eqref{eq:NLS} admits a nontrivial asymptotic free solution when $F$ satisfies Assumption \ref{asmp:main} and $g_1=0$.
For example, $F(u)=|\Re u | \Re u - i |\Im u| \Im u$, $d=2$, is short range.
Indeed, the corresponding periodic function is $g(\t)=|\cos \t|\cos \t - i |\sin \t|\sin \t$ and so
\[
	g_n = \left\{
	\begin{aligned}
	& \frac{8}{\pi(n-2)n(n+2)}, &&n\equiv3 \mod 4, \\
	& 0, && \text{otherwise}.
	\end{aligned}
	\right.
\]
\end{remark}

The rest of the paper is organized as follows.
In the next section, we outline the proof.
Then, it will turn out that the main step of the proof is the estimate of non-resonant part (Proposition \ref{non:im1}).
After summarizing several useful estimates in Section \ref{sec:preliminary},
we prove Proposition \ref{non:im1} in Section \ref{sec:mainlem}.
Main theorem is then shown in Section \ref{sec:main_pf}.

\section{Outline of the proof}
By the decomposition \eqref{eq:decomp} and Assumption \ref{asmp:main}, we write
\[
	F(u) = g_1 |u|^{\frac2d}u + \sum_{n\neq 0,1} g_n |u|^{\frac2d-n} u^n.
\]
Denote
\[
	\mG_d (u) = g_1 |u|^{\frac2d}u, \quad  \mN_d(u)  = \sum_{n\neq 0,1} g_n |u|^{1+\frac2d-n} u^n.
\]
The heart of matter is that the expansion \eqref{eq:decomp} successfully extracts a ``resonant part'' $\mG_d$
which determines the shape of asymptotic behavior $u_p$.
The validity of the extraction is confirmed by proving the other part, a ``non-resonant part'' $\mN_d$, enjoys better time decay.
The decay comes from the fact that the phase of the non-resonant part is different from that of linear part.
In the integral form of the equation,
it can be seen that this disagreement of phase actually causes a time decay effect (cf.~ stationary phase).
This kind of additional decay was known for the case where
$\mN_d$ is a specific \emph{finite sum} of $|u|^{1+2/d-n}u^n$ ($n\neq0,1$) (see \cite{HNST,HWN}).
However, our non-resonant part is \emph{an infinite sum}.
In the technical point of view, a contribution of this paper is
a treatment of the infinite sum under Assumption \ref{asmp:main}.

We introduce a formulation in \cite{HWN} (see also \cite{HNST,ShT}).
In what follows, we let $t>1$ unless otherwise stated.
Let $U(t)=e^{it\D}$.
Introduce a multiplication operator $M(t)$ and a dilation operator $D(t)$ by
\begin{equation}\label{def:MD}
	M(t) = e^{\frac{i |x|^2}{4t}}, \quad (D(t)f)(x) = (2it)^{-\frac{d}{2}} f\(\frac{x}{2t}\).
\end{equation}
They are isometries on $L^2(\R^d)$. Then,
\begin{equation}\label{def:upw}
	u_p(t) = M(t)D(t) \widehat{w}(t), \quad \widehat{w}(t) = \wha{u_{+}} \exp(-i g_1 |\wha{u_{+}}|^{\frac{2}{d}} \log t).
\end{equation}
We regard the equation \eqref{eq:NLS} as
\[
\mL(u-u_{p}) = (F (u) -F (u_p)) - \mL u_p + \mG_{d} (u_{p}) + \mN_d(u_p),
\]
where $\mL=i\pa_t + \Delta_x$.
A computation shows that it is rewritten as the following integral equation;
\begin{equation}
\begin{aligned}
	u(t) -u_p(t) 
	&= i \int_t^{\I} U(t-s) \( F (u) - F (u_p)  \)(s) ds \\ 
	& +\mR(t) \wha{w} -i \int_t^{\I} U(t-s) \mR (s) \mG_d (\wha{w}) (s) \frac{ds}{s} \\
	& +i \int_t^{\I} U(t-s)\mN_d (u_p)(s) ds,
\end{aligned}
\label{inteq:1}
\end{equation}
where
\[
\mR(t) = M(t) D(t)\(U\(-\frac{1}{4t}\) -1\)
\]
(see \cite{HWN}).

Let $X_1=L^\I(\R)$ and $X_2=L^4(\R^2)$.
For $R,T,b>0$, we define a complete metric space
\begin{align*}
X_{d, T, b, R} &:= \{v \in C([T,\I); L^2(\R^d)) ;\  \norm{v-u_p}_{X_{d, T, b}} \le R \}, \\
\norm{v}_{X_{d,T,b}} &:= \sup_{t \in [T, \I)}t^{b} \norm{v(t)}_{L^2(\R^d)} + \sup_{t \in [T, \I)} t^b \( \int_t^{\I} \norm{v(s)}_{X_d}^4 ds \)^{\frac{1}{4}}, \\
d (u,v) &:= \norm{u-v}_{X_{d,T,b}} .
\end{align*}
We shall show that, under the assumption of Theorem \ref{thm:main},
 there exists $\eps_0>0$ such that for any data $u_+ \in H^{0,d} \cap \dot{H}^{-\d}$ with
$\Lebn{\wha{u_+}}{\I} \le \e_0$, we can choose $R,T>0$ so that
the map
\begin{equation}\label{def:Phi}
\begin{aligned}
\P(v)(t) :={}& u_p(t) + i \int_t^{\I} U(t-s) \( F (v) - F (u_p)  \)(s) ds  \\ 
& +\mR(t) \wha{w} -i \int_t^{\I} U(t-s) \mR (s) \mG_d (\wha{w}) (s) \frac{ds}{s}  \\
& +i \int_t^{\I} U(t-s)\mN_d (u_p)(s) ds. 
\end{aligned}
\end{equation}
is a contraction map on $X_{d, T, b, R}$.

To this end, we introduce three intermediate results.
The first one is a consequence of Strichartz' estimate.
\begin{lemma} \label{non1:g2}
Let $\wha{u_+} \in L^\infty$. Assume that $g(\theta)$ is Lipschitz continuous. 
If $b>d/4$ then it holds that
\begin{multline*}
\norm{\int_t^{\I}U(t-s) \( F (v) - F (u_p)\) ds}_{X_{d,T,b}} \\
\le C \norm{v-u_p}_{X_{d,T,b}} \( \norm{v-u_p}_{X_{d,T,b}}^{\frac{2}{d}}T^{\frac{1}{2} -\frac{2}{d}b} + \Lebn{\wha{u_+}}{\I}^{\frac{2}{d}} \), 
\end{multline*}
where $C$ depends on the Lipschitz constant of $g$.
\end{lemma}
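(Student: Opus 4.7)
The plan is to dualize via the energy Strichartz estimate, reducing the bound to an $L^1_s L^2_x$ estimate on $F(v) - F(u_p)$ over $(t, \infty)$, and then to integrate in time using the weighted structure of the $X_{d,T,b}$ norm. First, I would use the Lipschitz property of $g$ together with the degree $1 + 2/d$ homogeneity of $F$ to derive the standard pointwise bound
\[
|F(v) - F(u_p)| \le C\norm{g}_{\mathrm{Lip}}\bigl(|h|^{1+2/d} + |u_p|^{2/d}|h|\bigr), \qquad h := v - u_p,
\]
splitting the nonlinear difference into a $(1+2/d)$-power piece in $h$ and a piece that inherits the dispersive decay of $u_p$.

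Next, since $(4, X_d)$ is Strichartz admissible in both $d = 1$ and $d = 2$, the inhomogeneous Strichartz estimate on $(t, \infty)$ with the dual energy pair on the source side would give
\[
\norm{I(t)}_{L^2_x} + \Bigl(\int_t^\infty \norm{I(s)}_{X_d}^4\,ds\Bigr)^{1/4} \le C\int_t^\infty \norm{(F(v) - F(u_p))(s)}_{L^2_x}\,ds,
\]
where $I(t) := \int_t^\infty U(t-s)(F(v) - F(u_p))(s)\,ds$. For each time slice I would then bound the spatial $L^2_x$ norm of the source: $L^2$--$L^\infty$ interpolation yields $\norm{|h|^3}_{L^2_x} \le \norm{h}_{L^2_x}\norm{h}_{L^\infty_x}^2$ when $d = 1$, while the case $d = 2$ is direct via $\norm{|h|^2}_{L^2_x} = \norm{h}_{L^4_x}^2$; and the dispersive bound $\norm{u_p(s)}_{L^\infty_x} \le (2s)^{-d/2}\norm{\wha{u_+}}_{L^\infty}$ (immediate from the explicit formula for $u_p$) gives $\norm{|u_p|^{2/d} h}_{L^2_x} \le Cs^{-1}\norm{\wha{u_+}}_{L^\infty}^{2/d}\norm{h(s)}_{L^2_x}$.

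Write $A := \norm{h}_{X_{d,T,b}}$, so that $\norm{h(s)}_{L^2_x} \le As^{-b}$ and $\int_s^\infty \norm{h(\tau)}_{X_d}^4\,d\tau \le A^4 s^{-4b}$ for $s \ge T$. The $u_p$-contribution would be handled directly by $\int_t^\infty s^{-1}\norm{h(s)}_{L^2_x}\,ds \le CAt^{-b}$, which after multiplication by $t^b$ and taking the supremum in $t \ge T$ yields the $\norm{\wha{u_+}}_{L^\infty}^{2/d}$-factor. The hard part is the $|h|^{1+2/d}$-term, which reduces to establishing the weighted time bound
\[
\int_t^\infty \norm{h(s)}_{X_d}^2\,ds \le CA^2\, t^{1/2 - 2b}.
\]
I would prove this by writing the integrand as $s^{-\alpha}\cdot s^\alpha\norm{h(s)}_{X_d}^2$ with an auxiliary exponent $\alpha$, applying Cauchy--Schwarz in $s$, and then integrating by parts to convert $\int_t^\infty s^{2\alpha}\norm{h(s)}_{X_d}^4\,ds$ into the tail bound on $\int_s^\infty \norm{h}_{X_d}^4 d\tau$. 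The admissible range $\alpha \in (1/2,\, 2b)$ is nonempty precisely under the hypothesis $b > d/4$; this is the technical pinch point where that hypothesis enters. In $d = 1$ I would additionally pull the bound $\sup_{s\ge t}\norm{h(s)}_{L^2_x} \le At^{-b}$ out of $\int_t^\infty \norm{h}_{L^2_x}\norm{h}_{L^\infty_x}^2\,ds$ before applying this estimate. Combining everything, multiplying by $t^b$, and taking the supremum over $t \ge T$ (using $1/2 - 2b/d < 0$ so that the supremum is realized at $t=T$) then yields the claimed estimate.
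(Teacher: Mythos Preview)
Your argument is correct and follows the same route the paper indicates (pointwise Lipschitz bound on $F$, inhomogeneous Strichartz with the source in $L^1_sL^2_x$, then weighted time integration), which is precisely the approach in \cite{HWN,HNST,ShT} that the paper cites. One minor expository point: the condition for the range $\alpha\in(1/2,2b)$ to be nonempty is $b>1/4$, which coincides with $b>d/4$ only when $d=1$; in $d=2$ the hypothesis $b>d/4=1/2$ is used instead (as you in fact note) at the final step where you need $1/2 - 2b/d<0$ to take the supremum at $t=T$.
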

The estimate is essentially the same as in \cite{HWN,HNST,ShT}.
Remark that Lipschitz continuity of $g$ gives us
\begin{align*}
	\left| F(v) - F (u_p) \right| 
	\le C  \( \left| v-u_p \right|^{1+ \frac{2}{d}} + |u_p|^{\frac{2}{d}} \left|v - u_p\right| \).
\end{align*}
The detail is given in Appendix \ref{sec:A}.

The main step is the estimate of ``external terms'' on the right hand side of \eqref{def:Phi}.
The second one is due to Hayashi, Wang, and Naumukin \cite[Lemma 2.1]{HWN}.
\begin{lemma}\label{non1:g1}
Let $u_+ \in H^{0,d}$ and $d/2<\d<d$. Then, the estimates
\begin{multline*}
\norm{\mR (t) \wha{w}}_{L^{\I}_t (T, \I ; L^2)} + \norm{\mR (t) \wha{w}}_{L^{4}_t (T, \I ; X_d)} \\
\le CT^{-\frac{\d}{2}}\Jbr{g_1\Lebn{\wha{u_+}}{\I}^{\frac{2}{d}} \log T}^\d \norm{u_+}_{H^{0,d}},
\end{multline*}
and
\begin{align*}
&\norm{\int_t^{\I}U(t-s)\mR(s) \mG_d (\wha{w}) \frac{ds}{s}}_{L^{\I}_t (T, \I ; L^2)} + \norm{\int_t^{\I}U(t-s)\mR(s) \mG_d (\wha{w}) \frac{ds}{s}}_{L^{4}_t (T, \I ; X_d)} \\
&\le C |g_1| T^{-\frac{\d}{2}} \Jbr{g_1\Lebn{\wha{u_+}}{\I}^{\frac{2}{d}} \log T}^\d \Lebn{\wha{u_+}}{\I}^{\frac{2}{d}} \norm{u_+}_{H^{0,d}} 
\end{align*}
hold for all $T>1$.
\end{lemma}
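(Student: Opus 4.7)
The strategy hinges on the factorization
\[
\mR(t) = M(t)D(t)\bigl(U(-\tfrac{1}{4t}) - 1\bigr)
\]
combined with a fractional smoothing estimate for $U(-\tfrac{1}{4t})-1$ and a control of the modified profile $\wha w(t)$ in the fractional Sobolev space $\dot H^\d$. Since $M(t)$ is a unitary multiplier and $D(t)$ is an $L^2$-isometry, the $L^2$-bound reduces to estimating $(U(-\tfrac{1}{4t})-1)\wha w$ in $L^2$. On the Fourier side this is multiplication by $e^{i|\xi|^2/(4t)}-1$, and interpolating $|e^{iy}-1|\le 2$ with $|e^{iy}-1|\le|y|$ at level $\d/2\in[0,1]$ yields $|e^{i|\xi|^2/(4t)}-1|\lesssim t^{-\d/2}|\xi|^{\d}$, hence
\[
\norm{\mR(t)\wha w}_{L^2}\lesssim t^{-\d/2}\hSobn{\wha w(t)}{\d}.
\]
For the $L^4_t X_d$ part, I rewrite $(U(-\tfrac{1}{4t})-1)\wha w = -i\int_0^{1/(4t)} U(-\tau)\D \wha w \, d\tau$ and apply Strichartz on the admissible pair $(4,X_d)$ (admissible in both $d=1$ and $d=2$), followed by the same interpolation to recover the $t^{-\d/2}$ gain.

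To control $\hSobn{\wha w(t)}{\d}$ for $\wha w=\wha{u_+}\exp(-ig_1|\wha{u_+}|^{2/d}\log t)$, I combine the fractional Leibniz (Kato--Ponce) rule with a chain-rule bound for the unimodular factor. At integer order $k\le d$ the pointwise estimate
\[
|\n^k\wha w|\lesssim \Jbr{g_1\Lebn{\wha{u_+}}{\I}^{2/d}\log t}^{k}\sum_{j=0}^{k}|\n^j\wha{u_+}|
\]
gives $\hSobn{\wha w(t)}{k}\lesssim\Jbr{g_1\Lebn{\wha{u_+}}{\I}^{2/d}\log t}^{k}\Sobn{\wha{u_+}}{k}$, and interpolating with the trivial bound $\hSobn{\wha w(t)}{0}=\Lebn{\wha{u_+}}{2}$ produces
\[
\hSobn{\wha w(t)}{\d}\lesssim \Jbr{g_1\Lebn{\wha{u_+}}{\I}^{2/d}\log t}^{\d} \hSobn{\wha{u_+}}{\d}.
\]
Because $d$ is an integer, Plancherel identifies $\hSobn{\wha{u_+}}{d}$ with $\||x|^d u_+\|_{L^2}$ and further interpolation yields $\hSobn{\wha{u_+}}{\d}\lesssim\Sobn{u_+}{0,d}$, completing the first displayed inequality.

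For the second estimate, $|\wha w|=|\wha{u_+}|$ implies $\mG_d(\wha w)=g_1|\wha{u_+}|^{2/d}\wha w$, so $\mR(s)\mG_d(\wha w)$ inherits the previous bound multiplied by $|g_1|\Lebn{\wha{u_+}}{\I}^{2/d}$. Applying the dual Strichartz estimate to
\[
\int_t^\I U(t-s)\mR(s)\mG_d(\wha w)(s)\,\frac{ds}{s}
\]
together with the elementary integration $\int_t^\I s^{-1-\d/2}\Jbr{\log s}^{\d}\,ds\lesssim t^{-\d/2}\Jbr{\log t}^{\d}$ (via the substitution $u=\log s$) yields the claim. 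The main technical hurdle is the fractional chain-rule step: converting the integer-order loss $(\log t)^{k}$ into the interpolated loss $\Jbr{\log t}^{\d}$ at non-integer order $\d$ requires either a Kato--Ponce commutator argument or an explicit Littlewood--Paley decomposition of the unimodular phase. Beyond this point, the argument follows the scheme of \cite[Lemma 2.1]{HWN}.
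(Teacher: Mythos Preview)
The paper does not prove this lemma---it quotes \cite[Lemma 2.1]{HWN}---so there is no in-paper argument to compare against. Your plan is sound in its main lines: the $L^\infty_t L^2$ bound via the multiplier estimate $|e^{i|\xi|^2/(4t)}-1|\lesssim t^{-\delta/2}|\xi|^\delta$, the control of $\|\wha{w}\|_{\dot H^\delta}$ by interpolation between $L^2$ and $H^d$ (this is precisely the mechanism of Lemma~\ref{lem1:2}), and the second displayed estimate via inhomogeneous Strichartz with source in $L^1_tL^2_x$ all go through as you describe.

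The step that does not work as written is the $L^4_tX_d$ bound on $\mR(t)\wha{w}$. Two problems: the identity $(U(-\tfrac1{4t})-1)\wha{w}=-i\int_0^{1/(4t)}U(-\tau)\Delta\wha{w}\,d\tau$ requires $\wha{w}\in H^2$, which you do not have when $d=1$ (there $\wha{u_+}\in H^1$ only); and even when $d=2$, the resulting object $M(t)D(t)\int_0^{1/(4t)}U(-\tau)\Delta\wha{w}\,d\tau$ is not of the Duhamel form $\int U(t-s)F(s)\,ds$, so it is unclear how the Strichartz inequality is meant to apply. A repair that bypasses Strichartz for this term: use the dilation scaling $\|M(t)D(t)f\|_{X_d}=c\,t^{-1/2}\|f\|_{X_d}$ and bound $\|(U(-\tfrac1{4t})-1)\wha{w}\|_{X_d}$ directly---for $d=2$ via the embedding $\dot H^{1/2}(\R^2)\hookrightarrow L^4(\R^2)$ together with $|e^{i|\xi|^2/(4t)}-1|\lesssim(|\xi|^2/t)^{(\delta-1/2)/2}$, and for $d=1$ via $\|f\|_{L^\infty}\le\|\hat f\|_{L^1}$ followed by Cauchy--Schwarz with weight $\langle\xi\rangle^{\pm\delta}$. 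In either case one obtains $\|\mR(t)\wha{w}(t)\|_{X_d}\lesssim t^{-1/4-\delta/2}\|\wha{w}(t)\|_{H^\delta}$, and integrating the fourth power over $(T,\infty)$ yields the claimed $T^{-\delta/2}\Jbr{g_1\|\wha{u_+}\|_{L^\infty}^{2/d}\log T}^\delta$ rate.
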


The last one, in which the main technical issue lies, 
is an estimate on the term $\mN_d (u_p)$.
\begin{proposition} \label{non:im1}
Let $u_+ \in H^{0,d} \cap \dot{H}^{-\d} $ with $d/2 < \d < (d+1)/2$. Assume that $\sum_{n\in\Z}|n|^{1+\eta}|g_n| < \I$
for some $\eta>\frac12(\delta-\frac{d}2)$.
Then, the estimate
\begin{multline*}
	\norm{\int_t^{\I}U(t-s) \mN_d (u_p) ds}_{L^{\I}_t (T, \I ; L^2)} + \norm{\int_t^{\I}U(t-s) \mN_d (u_p) ds}_{L^{4}_t (T, \I ; X_d)} \\
	\le CT^{-\g} \Jbr{ g_1 \norm{\wha{u_+}}_{L^\I}^{\frac2{d}} \log T}^\d \Jbr{g_1\Lebn{\wha{u_+}}{\I}^{\frac{2}{d}}}\\
	\times   \Lebn{\wha{u_+}}{\I}^{\frac{2}{d}} \norm{u_+}_{H^{0, d} \cap \dot{H}^{-\d}} \sum_{n \ne 0,1} |n|^{1+\eta} |g_n|
\end{multline*}
holds for all $T>1$, where $\norm{u_+}_{H^{0,d} \cap \dot{H}^{-\d}} = \norm{u_+}_{H^{0,d}} + \norm{u_+}_{\dot{H}^{-\d}}$.
\end{proposition}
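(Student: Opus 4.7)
The strategy is to decompose $\mN_d(u_p) = \sum_{n\neq 0,1} g_n v_n$ with $v_n := |u_p|^{1+2/d-n}u_p^n$, to prove a per-$n$ estimate of the schematic form
\[
\Bigl\|\int_t^{\I} U(t-s) v_n(s)\, ds\Bigr\|_{L^\I_t([T,\I);L^2)\,\cap\, L^4_t([T,\I);X_d)} \le C\,|n|^{1+\eta}\,T^{-\g}\cdot(\text{factors in } u_+,\, g_1),
\]
and then sum against $\sum_{n\neq 0,1}|n|^{1+\eta}|g_n|<\I$ guaranteed by Assumption \ref{asmp:main}. Using $u_p = M(s)D(s)\wha{w}$ and $|M(s)| = 1$, each $v_n$ takes the explicit form
\[
v_n(s,x) = c_n\,s^{-d/2-1}\,e^{in|x|^2/(4s)}\,\Psi_n\bigl(s, x/(2s)\bigr),
\]
where $\Psi_n(s,y) = |\wha{u_+}(y)|^{1+2/d}\bigl(\wha{u_+}(y)/|\wha{u_+}(y)|\bigr)^n\exp(-ing_1|\wha{u_+}(y)|^{2/d}\log s)$. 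The mismatch between the phase $e^{in|x|^2/(4s)}$ and the Schr\"odinger phase $M(s) = e^{i|x|^2/(4s)}$ --- they coincide only at the resonant index $n=1$ --- is the mechanism producing the extra time decay for $n\neq 0,1$.

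To extract this decay I would first apply the inhomogeneous Strichartz estimate at the admissible pair $(4,\I)$ in $d=1$ and $(4,4)$ in $d=2$, reducing the spacetime norm to a dual Strichartz bound on $v_n$. Using $M(s)^n = M(s/n)$ together with the Schr\"odinger factorization $e^{i\tau\D}f = c_\tau M(\tau)D(\tau)\F[M(\tau)f]$ at $\tau = s/n$, one rewrites
\[
v_n(s) = c\, s^{-1}\, n^{-d/2}\, U(s/n)\bigl[M(-s/n)\F^{-1}\wt\Psi_n(s)\bigr],\qquad \wt\Psi_n(s, y) := \Psi_n(s, y/n),
\]
so that $U(t-s)v_n(s)$ collapses to the \emph{shifted} propagator $U\bigl(t - s(n-1)/n\bigr)$ acting on a function depending on $s$ only through a slow $\log s$-phase. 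The dispersive estimate for this shifted propagator produces algebraic decay in $s$ away from its critical time $s_c = nt/(n-1)$; combined with a direct bound near $s_c$ and with negative regularity traded via $u_+\in\dot{H}^{-\d}$ (absorbing $(-\D)^{-\d/2}$ on the source side), this yields the target $T^{-\g}$ with $\g = \d/2$ for $d=1$ and $\g = (\d+2)/6$ for $d=2$.

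The main obstacle is the uniform-in-$n$ bookkeeping. A full spatial integration by parts against $e^{in|x|^2/(4s)}$ gains a factor $|n|^{-1}$, but each $\n$ falling on $\Psi_n$ hits $(\wha{u_+}/|\wha{u_+}|)^n = e^{in\arg\wha{u_+}}$ and reinstates a factor $|n|$, so no net gain survives a full integration by parts. The resolution is a \emph{fractional} integration by parts of order $\eta$, which requires controlling fractional derivatives of $\Psi_n$ in suitable norms with growth at most $|n|^{\eta}$. The regularity $u_+\in H^{0,d}$ (equivalent to $\wha{u_+}\in H^d$) supplies these derivatives of the angular factor, while $u_+\in\dot{H}^{-\d}$ supplies the $T^{-\g}$ gain; the threshold $\eta > \tfrac12(\d-d/2)$ is exactly the amount of fractional integration by parts that balances the $|n|^{1+\eta}$ growth against the desired decay. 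The factors $\Jbr{g_1\|\wha{u_+}\|_{L^\I}^{2/d}\log T}^{\d}$ and $\Jbr{g_1\|\wha{u_+}\|_{L^\I}^{2/d}}$ finally emerge from tracking the $\log s$-phase of $\Psi_n$ through these estimates, and summation in $n$ via Assumption \ref{asmp:main} completes the proof.
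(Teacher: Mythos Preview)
Your proposal correctly identifies the core mechanism (the phase mismatch $e^{in|x|^2/(4s)}$ vs.\ $e^{i|x|^2/(4s)}$ for $n\neq 1$) and the main difficulty (summable $n$-dependence), but the execution you sketch is \emph{not} the paper's and, as written, has a genuine gap at the decisive step.

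The paper does \emph{not} use a dispersive estimate for the shifted propagator, nor any integration by parts in the spatial variable. Instead, after passing to the Fourier side via the factorization $\F U(-s)D(s)E^{n}(s)=i^{d/2}E^{1-1/n}(s)U(n/4s)D(n/2)$, the gain comes from integration by parts \emph{in time}, using the identity $E^{1-1/n}(s)=A(s)\,\partial_s\!\bigl(sE^{1-1/n}(s)\bigr)$ with $A(s)=\bigl(1+\tfrac{i(1-1/n)s}{2}|\xi|^2\bigr)^{-1}$. The decay $T^{-\gamma}$ then arises from the weight $A(s)\sim(1+s|\xi|^2)^{-1}$, and the role of $u_+\in\dot H^{-\delta}$ is precisely to absorb the factor $|\xi|^{-\delta}$ this weight produces at low frequency. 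To make this time-IBP legitimate the paper first splits via a regularizing operator $\mathcal{K}(t,n)$ that cuts off at frequency scale $|n|\,t^{\sigma/2}$; the dependence on \emph{both} $t$ and $n$ is one of the two announced new ingredients, and it is what keeps the $n$-growth under control when the time derivative falls on $\mathcal{K}$. The threshold $\eta>\tfrac12(\delta-\tfrac d2)$ is not produced by any ``fractional integration by parts''; it emerges from a concrete H\"older/Sobolev estimate (the bound on term $\mathrm{I}$ in Lemma~\ref{useful:1:1}, where one needs $\tfrac{\delta}{2}-\tfrac{d}{2p_1}<\eta$ for some $p_1>2$).

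Your alternative route---dispersive bounds for $U\bigl(t-s(n-1)/n\bigr)$ plus a spatial ``fractional integration by parts of order $\eta$''---is not fleshed out enough to stand. The dispersive estimate puts the source in $L^1$ (or a dual Strichartz space), and you do not say how $\|\F^{-1}\widetilde\Psi_n\|_{L^1}$ is controlled uniformly in $n$, nor how the singularity at $s_c=nt/(n-1)$ is handled with the correct $n$-dependence. More seriously, ``fractional integration by parts of order $\eta$ in space'' against $e^{in|x|^2/(4s)}$ is not a standard operation and you do not define it; a full spatial IBP gains $s/(n|x|)$, not a power of $s^{-1}$, so the claimed link between this and the threshold $\eta>\tfrac12(\delta-\tfrac d2)$ is an assertion, not an argument. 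If you want to pursue your route you would need to replace that step with something concrete---most naturally the time-IBP and the $(t,n)$-dependent cutoff that the paper actually uses.
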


As for the assumption on the nonlinearity,
the assumption of Proposition \ref{non:im1} is stronger than that of Proposition \ref{non1:g2}
because if $g$ satisfies $\sum_{n} |n|^{1+\eta} |g_n| <\infty$ then it is Lipschitz continuous.
The assumption of the Theorem \ref{thm:main} comes from this proposition
in order to estimate Sobolev norm of the nonlinearity.
Roughly speaking, $s$ time derivative of $|u|^{1+2/d-n}u^n$ produces $O(|n|^s)$.
Hence, to weaken the assumption of the nonlinearity we shall use as less derivative as possible.
We remark again that we have to pay attention to the above growth order just because we are 
working with the non-resonant term which consists of infinitely many terms.
Our proof is in the same spirit as in \cite{HWN}.
However, the argument in \cite{HWN} works only for large $\eta$.
We introduce two techniques to handle small $\eta$.
Especially, they are necessary to include the example \eqref{eq:example1}.
The detail of the technique is discussed in the forthcoming section.

\section{Key estimates}\label{sec:preliminary}

We introduce two techniques to weaken the assumption on the nonlinearity.
The argument in \cite{HWN} works only for large $\eta$.

\subsection{Estimates on nonlinearity}
The first one is related to estimation of 
 $\||\wha{w}|^{1+\frac2d-n}\wha{w}^n\|_{H^\d}$.
One easily obtains such an estimate via an equivalent difference characterization of the norm
of the corresponding Besov space $B^\d_{2,2}$.
However, a straightforward calculation in this direction gives us no more than an upper bound of order $O(n^d)$
(remark that here $d$ equals the minimum integer larger than $\d$).
Hence, we use an interpolation inequality to improve the order into $O(n^\delta)$
in exchange for strengthening the regularity assumption on the data. 
This is the first technique.

Let us begin with a preliminary estimate.
\begin{lemma} \label{lem1:1}
For $n \in \Z$, it holds that
\begin{align*}
\norm{|u|^{1+\frac2d - n}u^{n}}_{{H}^{d}} &\le C\Jbr{n}^{d} \norm{u}_{L^\I}^{\frac2d}\norm{u}_{{H}^{d}}
\end{align*}
for any $u \in {H}^{d}(\R^d)$. 
\end{lemma}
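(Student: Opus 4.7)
The plan is to work in the polar representation. Writing $u=\rho e^{i\theta}$ with $\rho=|u|$ and $\theta=\arg u$, the target quantity becomes
\[
f_n(u) := |u|^{1+\frac2d-n}u^n = \rho^{1+\frac2d}\,e^{in\theta}.
\]
Since $|f_n(u)|=|u|^{1+2/d}$ is independent of $n$, the $L^2$ estimate $\|f_n(u)\|_{L^2}\le \|u\|_{L^\infty}^{2/d}\|u\|_{L^2}$ is immediate and costs no factor of $n$. The work is therefore in bounding $\|\partial^k f_n(u)\|_{L^2}$ for $1\le k\le d$.

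I would compute the derivatives by applying the chain rule to the factorisation $\rho^{1+2/d}e^{in\theta}$. Each derivative landing on the exponential pulls out a factor of $in$, so a $k$-th order derivative contributes at most $\Jbr{n}^k$; taking $k\le d$ yields the announced $\Jbr{n}^d$. The remaining geometric factors are controlled via the identity $\partial_j u=(\partial_j\rho + i\rho\,\partial_j\theta)e^{i\theta}$, which gives $|\partial_j\rho|^2+\rho^2|\partial_j\theta|^2=|\partial_j u|^2$ and hence $|\partial_j\rho|\le |\partial_j u|$ and $\rho|\partial_j\theta|\le|\partial_j u|$. Differentiating the same identity once more yields $|\partial^2\rho|,\rho|\partial^2\theta|\lesssim |\partial^2 u|+|\partial u|^2/\rho$, which is needed only in the case $d=2$.

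Plugging these into the chain rule produces pointwise bounds of the schematic form
\[
|\partial^k f_n(u)| \lesssim \Jbr{n}^k\bigl(|u|^{2/d}|\partial^k u| + |u|^{2/d-1}|\partial u|^2 + \cdots\bigr),
\]
where for $k\le d$ all powers of $|u|$ that survive are nonnegative (the $1/\rho$ singularities from $\partial^2\theta$ are absorbed by the $\rho^{1+2/d}$ prefactor), so no singularity at $u=0$ arises. In the one-dimensional case, only first derivatives enter and the bound $\|\partial f_n(u)\|_{L^2}\lesssim \Jbr{n}\|u\|_{L^\infty}^2\|\partial u\|_{L^2}$ follows at once, which combined with the $L^2$ estimate gives the claim for $d=1$.

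The principal obstacle lies in the $d=2$ case, where the second derivative contains the quadratic piece $|\partial u|^2$ that must be controlled in $L^2(\R^2)$. My plan is to invoke the Gagliardo--Nirenberg inequality $\|\partial u\|_{L^4(\R^2)}^2 \lesssim \|u\|_{L^\infty}\hSobn{u}{2}$, which gives $\bigl\||\partial u|^2\bigr\|_{L^2}\lesssim \|u\|_{L^\infty}\Sobn{u}{2}$; the remaining term $|u||\partial^2 u|$ is directly bounded by $\|u\|_{L^\infty}\hSobn{u}{2}$. Summing yields $\|\partial^2 f_n(u)\|_{L^2}\lesssim \Jbr{n}^2 \|u\|_{L^\infty}\Sobn{u}{2}$, which together with the lower-order estimates closes the $d=2$ bound and completes the proof.
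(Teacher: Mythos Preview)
Your proof is correct and follows the same approach as the paper, which simply declares the lemma ``obvious'' from the equivalence $\norm{f}_{H^d}^2\sim \sum_{|\alpha|\le d}\norm{\pa_x^\alpha f}_{L^2}^2$ and gives no further details. Your polar-coordinate computation, together with the Gagliardo--Nirenberg bound $\norm{\partial u}_{L^4(\R^2)}^2\lesssim \norm{u}_{L^\infty}\norm{u}_{\dot H^2}$ for the quadratic piece in $d=2$, is exactly the natural way to carry out that omitted computation.
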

The lemma is obvious by $\norm{f}_{H^d}^2\sim \sum_{\alpha \in (\Z_{\ge0})^d,\,|\alpha|\le d}\norm{\pa_x^\alpha f}_{L^2}^2$.
Then, we have the following estimate.
\begin{lemma} \label{lem1:2}
Let $\wha{w}$ be as in \eqref{def:upw}.
Then, it holds that
\begin{align*}
\norm{\wha{w}}_{H^{d}} &\le C\norm{u_+}_{H^{0, d}}\Jbr{ g_1 \norm{\wha{u_+}}_{L^\I}^{\frac2{d}} \log t}^d,\\
\norm{\partial_t\wha{w}}_{H^{d}} &\le C\frac{|g_1|}{t}\norm{\wha{u_+}}_{L^\I}^{\frac2{d}} \norm{u_+}_{H^{0, d}}\Jbr{ g_1 \norm{\wha{u_+}}_{L^\I}^{\frac2{d}} \log t}^d.
\end{align*}
Moreover, 
\begin{align*}
	\norm{|\wha{w}|^{1+\frac2d - n}{\wha{w}}^{n}}_{{H}^{\d}}
	&\le C \Jbr{n}^\d
	 \norm{\wha{u_+}}_{L^\I}^{\frac2d} 
	\norm{u_+}_{H^{0, d}}\Jbr{ g_1\norm{\wha{u_+}}_{L^\I}^{\frac2d} \log t}^\d, \\
	\norm{\partial_t (|\wha{w}|^{1+\frac2d - n}{\wha{w}}^{n})}_{{H}^{\d}}
	&\le C\frac{\Jbr{n}^{1+\d}|g_1|}{t}\norm{\wha{u_+}}_{L^\I}^{\frac4{d}} \norm{u_+}_{H^{0, d}}\Jbr{ g_1 \norm{\wha{u_+}}_{L^\I}^{\frac2{d}} \log t}^\d
\end{align*}
for any $0 \le  \d \le d$ and $t\ge1$.
\end{lemma}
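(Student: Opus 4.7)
I will prove the four bounds in order, treating the first $H^d$ estimate for $\wha{w}$ as the main analytical step and obtaining the remaining three from it by combining Lemma \ref{lem1:1} with standard Sobolev interpolation.

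For the first estimate, I write $\wha{w} = \wha{u_+}\, e^{i\Phi}$ with real phase $\Phi(t,x) = -g_1 |\wha{u_+}(x)|^{2/d}\log t$. Since $|\wha{w}| = |\wha{u_+}|$, the $L^2$ component is immediate, and the task reduces to estimating spatial derivatives of order at most $d$. I expand via the Leibniz rule and Fa\`a di Bruno: the $k$th derivative of $e^{i\Phi}$ is a polynomial in $\pa^j \Phi$ ($1 \le j \le k$) times $e^{i\Phi}$, and each $\pa^j \Phi$ carries a factor $g_1 \log t$ together with a derivative of $|\wha{u_+}|^{2/d}$. In $d=1$, $|z|^{2/d}=|z|^2$ is smooth in $z$, so a direct chain-rule computation yields pointwise bounds in terms of $\wha{u_+}$ and $\pa \wha{u_+}$. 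In $d=2$, $|z|^{2/d}=|z|$ is only Lipschitz, so singular factors such as $\pa^2 |\wha{u_+}|$ appear; these are controlled by pairing them with the remaining $\wha{u_+}$ from the Leibniz expansion (via the pointwise bound $|\wha{u_+}\,\pa^2|\wha{u_+}|| \le |\wha{u_+}||\pa^2\wha{u_+}| + 2|\pa\wha{u_+}|^2$) and by invoking the two-dimensional Gagliardo-Nirenberg inequality $\norm{\pa\wha{u_+}}_{L^4}^2 \le C\norm{\wha{u_+}}_{L^\infty}\norm{u_+}_{H^{0,2}}$ to absorb squared first-derivative terms with the correct power of $\norm{\wha{u_+}}_{L^\infty}$. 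Summing over the Leibniz/Fa\`a di Bruno terms yields $\norm{\wha{w}}_{H^d} \le C\norm{u_+}_{H^{0,d}} \Jbr{g_1\norm{\wha{u_+}}_{L^\infty}^{2/d}\log t}^d$. The second estimate then follows from the direct identity $\pa_t\wha{w} = -(ig_1/t)|\wha{w}|^{2/d}\wha{w}$ (using that $|\wha{w}|=|\wha{u_+}|$ is time-independent), Lemma \ref{lem1:1} with $n=1$, and the just-proved first estimate.

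For the third estimate, I use the standard interpolation $\norm{f}_{H^\delta} \le C\norm{f}_{L^2}^{1-\delta/d}\norm{f}_{H^d}^{\delta/d}$ applied to $f = |\wha{w}|^{1+2/d-n}\wha{w}^n$. The $L^2$ norm is bounded by $\norm{\wha{u_+}}_{L^\infty}^{2/d}\norm{\wha{u_+}}_{L^2}$, uniformly in $n$, while Lemma \ref{lem1:1} gives the $H^d$ bound with the factor $\Jbr{n}^d$. Interpolation produces the desired factor $\Jbr{n}^\delta$, and inserting the first estimate for $\norm{\wha{w}}_{H^d}$ together with $\norm{\wha{u_+}}_{L^2}\le\norm{u_+}_{H^{0,d}}$ completes the bound. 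The fourth estimate follows from the analogous identity $\pa_t(|\wha{w}|^{1+2/d-n}\wha{w}^n) = -(ing_1/t)|\wha{w}|^{1+4/d-n}\wha{w}^n$ (again since $|\wha{w}|$ is time-independent), combined with the obvious variant of Lemma \ref{lem1:1} with $1+2/d$ replaced by $1+4/d$ (proved by the same derivative count) and the same interpolation; the extra factor $n$ out front accounts for the $\Jbr{n}^{1+\d}$ in the conclusion.

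The main obstacle is the $d=2$ case of the first estimate: the non-smoothness of $|z|$ at the origin produces formally divergent contributions in the chain rule for derivatives of $\Phi$, and their handling requires the systematic algebraic cancellations from Leibniz together with a careful application of two-dimensional Gagliardo-Nirenberg to land the estimate in $L^2$ with exactly the scaling $\norm{\wha{u_+}}_{L^\infty}^{2/d}\norm{u_+}_{H^{0,d}}$ demanded by the statement.
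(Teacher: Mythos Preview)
Your argument is correct; the interpolation step for the third estimate is exactly what the paper does, and your treatment of the first estimate (Leibniz/Fa\`a di Bruno together with the pointwise bound $|\wha{u_+}\,\pa^2|\wha{u_+}||\lesssim |\wha{u_+}||\pa^2\wha{u_+}|+|\pa\wha{u_+}|^2$ and the $2$D Gagliardo--Nirenberg inequality $\norm{\pa\wha{u_+}}_{L^4}^2\le C\norm{\wha{u_+}}_{L^\infty}\norm{u_+}_{H^{0,2}}$) is a faithful expansion of what the paper calls ``immediate.''

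Where you depart from the paper is in the second and fourth estimates, and your route is a genuine simplification. The paper proves the second estimate by computing $\pa_t\wha w$ explicitly and then bounding $\nabla^2\pa_t\wha w$ from scratch via Schwarz and Gagliardo--Nirenberg; for the fourth it writes $\pa_t(|\wha w|^{1+2/d-n}\wha w^n)$ as a sum $\tfrac{g_1}{t}\bigl(P_1(\wha{u_+})e^{-ig_1|\wha{u_+}|\log t}+P_2(\wha{u_+})e^{ig_1|\wha{u_+}|\log t}\bigr)$ and repeats the $\dot H^d$ computation before interpolating. You instead exploit that $|\wha w|=|\wha{u_+}|$ is time-independent to obtain the clean identities $\pa_t\wha w=-(ig_1/t)|\wha w|^{2/d}\wha w$ and $\pa_t(|\wha w|^{1+2/d-n}\wha w^n)=-(ing_1/t)|\wha w|^{1+4/d-n}\wha w^n$, which reduce both estimates to Lemma~\ref{lem1:1} (with $n=1$, resp.\ its $1+4/d$ variant) applied to $\wha w$, followed by the already-proved first estimate and interpolation. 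This avoids repeating the Gagliardo--Nirenberg computation and makes the origin of the factors $\Jbr{n}^{1+\d}$ and $\norm{\wha{u_+}}_{L^\infty}^{4/d}$ transparent. The paper's direct approach has the minor advantage of not invoking an auxiliary variant of Lemma~\ref{lem1:1}, but your reduction is shorter and more systematic.
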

\begin{proof}
The first estimate is immediate.
By interpolation inequality, H\"older's inequality, and Lemma \ref{lem1:1}, we have
\begin{align*}
	\norm{|\wha{w}|^{1+\frac2d - n}{\wha{w}}^{n}}_{{H}^{\d}}
	&\le \norm{|\wha{w}|^{1+\frac2d - n}{\wha{w}}^{n}}_{L^2}^{1-\frac{\delta}d}
	\norm{|\wha{w}|^{1+\frac2d - n}{\wha{w}}^{n}}_{{H}^{d}}^{\frac{\delta}d}\\
	&\le C\Jbr{n}^\delta \norm{\wha{w}}_{L^\I}^{\frac2d} \norm{\wha{w}}_{L^2}^{1-\frac{\d}d} \norm{\wha{w}}_{H^d}^{\frac{\d}d}.
\end{align*}
Then, the third estimate is a consequence of the first.

Let us next prove the second. We only consider $d=2$. The other case is similar.
By definition, we have
\[
	\partial_t \wha{w} = - \frac{ig_1}{t}
	|\wha{u_{+}}| \wha{u_{+}} \exp(-i g_1 |\wha{u_{+}}| \log t).
\]
Hence, by the Schwarz inequality, one sees that
\begin{align*}
	|\nabla^2 \partial_t \wha{w}| \le C\frac{|g_1|}{t} (|\wha{u_{+}}||\nabla^2 \wha{u_+}|+
	|\nabla\wha{u_{+}}|^2)
	+ C\frac{|g_1|^3 (\log t)^2}{t} 
	|\wha{u_{+}}||\nabla\wha{u_{+}}|^2.
\end{align*}
Then, a use of Gagliardo-Nirenberg inequality yields
\[
	\norm{\partial_t \wha{w}}_{\dot{H}^2}
	\le C \frac{|g_1|}{t} \norm{\wha{u_+}}_{L^\I}\norm{\wha{u_+}}_{\dot{H}^2}
	+ C\frac{|g_1|^3 (\log t)^2}{t} 
	\norm{\wha{u_+}}_{L^\I}^{3}\norm{\wha{u_+}}_{\dot{H}^2}.
\]
Plugging this to the trivial $L^2$ estimate, we obtain the second estimate.

To prove the last estimate,
we remark that $\partial_t (|\wha{w}|^{1+\frac2d-n} \wha{w}^n)$ is of the form
$$\frac{g_1}{t}(P_1(\wha{u_{+}}) \exp(-i g_1 |\wha{u_{+}}| \log t)
+P_2(\wha{u_{+}})\exp(i g_1 |\wha{u_{+}}| \log t))$$
with polynomials $P_j(z)= O(\Jbr{n}|z|^{\frac{4}d-n}z^{n})$ and so that we 
can obtain
\[
	\norm{\partial_t (|\wha{w}|^{1+\frac2d-n} \wha{w}^n)}_{\dot{H}^d} \le
	C\frac{\Jbr{n}^{1+d}|g_1|}{t}\norm{\wha{u_+}}_{L^\I}^{\frac4{d}} \norm{u_+}_{H^{0, d}}\Jbr{ g_1 \norm{\wha{u_+}}_{L^\I}^{\frac2{d}} \log t}^d
\]
as in the second estimate.
Then, mimicking the proof of the third estimate, we obtain the desired estimate.
\end{proof}

\subsection{Time-dependent regularizing operator}
To obtain additional time decay property of non-resonant part $\mN_d(u_p)$,
we use integration by parts in time.
However, the argument requires spatial regularity.
In \cite{HWN}, Hayashi, Wang, and Naumkin introduce
a \emph{time-dependent} regularizing operator (or a \emph{time-dependent} cutoff to low-frequency),
and reduce required regularity by applying the above integration by parts only for a low-frequency part
and by estimating the remaining high-frequency part with the fact that the operator converges to the identity operator as $t\to\I$.

In this paper, we take this kind of regularizing operator \emph{dependently on both $t$ and $n$}.
This is the second technique to weaken the assumption on the nonlinearity.

Let $\psi \in \mathcal{S}$.
We introduce a regularizing operator $\m{K}_\psi=\m{K}_\psi(t,n)$ by
\begin{equation}\label{def:Kpsi}
	\m{K}_\psi := \psi\( \frac{i\nabla}{|n| t^{\s/2}} \):=\F^{-1} 
	\psi \( \frac{\xi}{|n| t^{\s/2}} \) \F,
\end{equation}
where $\sigma =1$ if $d=1$ and $\sigma = \frac{2+\delta}{3}>1$ if $d=2$.
We have
\[
\m{K}_\psi f = C_d ((|n| t^{\s/2} )^{d}  \F^{-1}\psi( |n| t^{\s/2} \cdot ) * f)(x).
\]
\begin{lemma}[Boundedness of $\m{K}$] \label{mol:1.1}
Take $\psi \in \mathcal{S}$ and set $\m{K}_\psi$ as in \eqref{def:Kpsi}.
Let $s\in \R $ and $\theta \in [0,1]$. For any $t>0$ and $n\neq 0$, the followings hold.
\begin{enumerate}
\renewcommand{\labelenumi}{(\roman{enumi})}
\item
$\m{K}_\psi$ is a bounded linear operator on $L^2$ and satisfies $\norm{\m{K}_\psi}_{\m{L}(L^2)} \le \norm{\psi}_{L^\I}$.
Further, $\m{K}_\psi$ commutes with $\nabla$. In particular, $\m{K}_\psi$ is a bounded linear operator on $\dot{H}^s$ and satisfies $\norm{\m{K}_\psi}_{\m{L}(\dot{H}^s)} \le \norm{\psi}_{L^\I}$.
\item 
$\m{K}- {\psi}(0)$ is a bounded linear operator from $\dot{H}^s$ to $\dot{H}^{s+\theta}$ with norm
$$\norm{\m{K}_\psi-\psi(0)}_{\m{L}(\dot{H}^{s+\t} ,\dot{H}^{s})} \le Ct^{-\frac{\theta \s}{2}}|n|^{-{\t}}.$$
\end{enumerate}
\end{lemma}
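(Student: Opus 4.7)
The plan is to reduce everything to a symbol estimate on the Fourier side, since $\mathcal{K}_\psi$ is by definition the Fourier multiplier with symbol $\psi(\xi/(|n|t^{\sigma/2}))$. Write $m_{n,t}(\xi) := \psi(\xi/(|n|t^{\sigma/2}))$. Both claims then amount to $L^\infty$-type estimates on $m_{n,t}$ or on $m_{n,t}-\psi(0)$, after which Plancherel turns them into operator norm bounds.

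For part (i), note that $\|\mathcal{K}_\psi f\|_{L^2} = \|m_{n,t}\wha{f}\|_{L^2}\le \|m_{n,t}\|_{L^\I}\|f\|_{L^2}\le \|\psi\|_{L^\I}\|f\|_{L^2}$ by Plancherel. Since $\mathcal{K}_\psi$ and $\nabla$ are both Fourier multipliers, they commute, so for any $s\in\R$,
\[
\Sobn{\mathcal{K}_\psi f}{s} = \||\xi|^s m_{n,t}(\xi)\wha{f}\|_{L^2} \le \|\psi\|_{L^\I}\||\xi|^s\wha{f}\|_{L^2} = \|\psi\|_{L^\I}\hSobn{f}{s}.
\]

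For part (ii), the heart of the matter is the pointwise bound
\[
|\psi(y)-\psi(0)| \le C|y|^\theta, \qquad y\in\R^d,\ \theta\in[0,1],
\]
with $C$ depending only on $\|\psi\|_{L^\I}+\|\nabla\psi\|_{L^\I}$. This is obtained by interpolating the trivial bound $|\psi(y)-\psi(0)|\le 2\|\psi\|_{L^\I}$ with the mean value bound $|\psi(y)-\psi(0)|\le\|\nabla\psi\|_{L^\I}|y|$: for $|y|\le 1$ use the latter and note $|y|\le|y|^\theta$, for $|y|\ge 1$ use the former and note $1\le|y|^\theta$. Substituting $y=\xi/(|n|t^{\sigma/2})$ yields
\[
|m_{n,t}(\xi)-\psi(0)|\le C\frac{|\xi|^\theta}{|n|^\theta t^{\theta\sigma/2}}.
\]

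The claim then follows again by Plancherel: for $f\in\dot{H}^{s+\theta}$,
\[
\hSobn{(\mathcal{K}_\psi-\psi(0))f}{s} = \||\xi|^s(m_{n,t}-\psi(0))\wha{f}\|_{L^2} \le Ct^{-\theta\sigma/2}|n|^{-\theta}\||\xi|^{s+\theta}\wha{f}\|_{L^2},
\]
which is the desired bound. No real obstacle is anticipated; the only delicate point is the interpolation argument giving the $|y|^\theta$ bound uniformly in $\theta\in[0,1]$, which is needed so that the same operator works across the whole range of regularity gains used later.
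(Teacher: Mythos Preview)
Your proof is correct and slightly more streamlined than the paper's. The paper also declares part (i) obvious, but for part (ii) it takes a physical-space detour: it writes $\mathcal{K}_\psi$ as a convolution, bounds $\|(\mathcal{K}_\psi-\psi(0))\phi\|_{L^2}$ by $\int |\F^{-1}\psi(|n|t^{\sigma/2}\eta)|\,\|\phi(\cdot-\eta)-\phi\|_{L^2}\,d\eta$, then passes back to the Fourier side via $\|\phi(\cdot-\eta)-\phi\|_{L^2}\le C\|\,|\xi|^\theta\F\phi\,\|_{L^2}|\eta|^\theta$, and finally integrates in $\eta$. You instead bound the multiplier symbol $m_{n,t}(\xi)-\psi(0)$ directly by $C(|n|t^{\sigma/2})^{-\theta}|\xi|^\theta$ through the elementary interpolation $|\psi(y)-\psi(0)|\le C|y|^\theta$, and then apply Plancherel once. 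Both arguments hinge on the same $|\xi|^\theta$ gain and give the same constant dependence on $\psi$; your route simply avoids the convolution representation and the translation-difference characterization, making the Fourier-multiplier nature of the estimate more transparent.
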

\begin{proof}
The first item is obvious.
Let us prove the second.
It suffices to show the case $s=0$.
For $f \in \dot{H}^{\theta}$, one sees from the equivalent expression that
\begin{align*}
\Lebn{(\m{K}_\psi-{\psi}(0))\f}{2} &\le C_d ({|n| t^{\s/2}} )^{d} \int_{\R^d} |\F^{-1} \psi(|n| t^{\s/2}\eta)| \Lebn{\f(\cdot- \eta) - \f}{2} d\eta \\
&\le C ({|n| t^{\s/2}} )^{d} \int_{\R^d} |\F^{-1} \psi(|n| t^{\s/2}\eta)| \Lebn{ \sin\frac{ \xi \cdot \eta}2  \F\f}{2} d\eta \\
&\le C ({|n| t^{\s/2}} )^{d} \int_{\R^d} |\F^{-1} \psi(|n| t^{\s/2}\eta)| |\et|^{\theta} \Lebn{|\xi|^{\theta} \F\f}{2} d\eta \\
&\le C_\psi t^{-\frac{\theta \s}{2}}|n|^{-{\t}}\norm{\f}_{\dot{H}^\theta}.
\end{align*}
The proof is completed.
\end{proof}

\section{Proof of Proposition \ref{non:im1}}\label{sec:mainlem}
In this section, we prove Proposition \ref{non:im1}. 
Using $u_p  = M(t)D(t)\wha{w}(t) = D(t)E(t)\wha{w}(t)$ with $E(t) = e^{it|x|^2/2}$, we obtain
\[
	\mN_d (u_p) = \sum_{n \ne 0,1} g_n \( \frac{1}{it} D(t) E^n(t) \phi_n (t) \),
\]
where
\begin{equation}\label{def:phin}
	\phi_n(t):= |\wha{w}(t)|^{1+\frac{2}{d}-n}\wha{w}^n(t).
\end{equation}
Let $\psi_0(x) = e^{-|x|^2/4} \in \m{S}$ and set
$\m{K}(t,n):=\m{K}_{\psi_0}(t,n)$ as in \eqref{def:Kpsi}
with $\s = 1$ if $d=1$ and $\s = \frac{2+\d}{3}>1$ if $d=2$.
We decompose $\mN_d(u_p)$ into low frequency part and high frequency part,
\[
	\mN(u_p) = \m{P}_d + \m{Q}_d, 
\]
where
\begin{align*}
	\m{P}_d &= \sum_{n \ne 0,1} g_n \( \frac{1}{it} D(t)\( E^n(t) \m{K} \phi_n(t) \) \), \\
	\m{Q}_d &= -\sum_{n \ne 0,1} g_n \( \frac{1}{it} D(t)\( E^n(t) (\m{K}-1) \phi_n(t) \) \).
\end{align*}

We estimate high frequency part $\m{Q}_d$.
By Strichartz' estimate, it suffices to bound $\norm{\m{Q}_d}_{L^1(T,\infty;L^2)}$.
By using Lemma \ref{mol:1.1} (ii) and Lemma \ref{lem1:2}, we have
\begin{align*}
	\Lebn{\m{Q}_d(t)}{2} &\le Ct^{-1} \sum_{n \ne 0,1}|g_n| \Lebn{(\m{K}-1) \phi_n}{2} \\
	&\le Ct^{-1-\frac{\t\s}{2}} \sum_{n \ne 0,1} |n|^{-\t} |g_n| \norm{\phi_n}_{\dot{H}^\theta} \\
	&\le Ct^{-1-\frac{\t\s}{2}}
	 \norm{\wha{u_+}}_{L^\I}^{\frac2d} 
	\norm{u_+}_{H^{0, d}}\Jbr{ g_1\norm{\wha{u_+}}_{L^\I}^{\frac2d} \log t}^\t
	\sum_{n \ne 0,1} |g_n|.
\end{align*}
We choose $\theta= \d < 1$ if $d=1$ and $\t = 1$ if $d=2$. Then, we obtain
\begin{multline}
	\norm{\m{Q}_d}_{L^1(T,\infty;L^2)} \\ 
	\le CT^{-\gamma}\Jbr{ g_1\norm{\wha{u_+}}_{L^\I}^{\frac2d} \log T}^\d
 \norm{\wha{u_+}}_{L^\I}^{\frac2d}\norm{u_+}_{H^{0, d}} 
	\sum_{n \ne 0,1} |g_n|. 
	\label{non:n4}
\end{multline}

Next, we estimate low frequency part $\norm{\int_t^{\I}U(t-s) \m{P}_d (u_p) ds}_{X_d}$. 
By the factorization of $U(t)$,
\[
	U(t) = M(t)D(t)\F M(t) = M(t)D(t)U\(-\frac{1}{4t}\)\F.
\]
Further, the Gagliardo-Nirenberg inequality implies $\Lebn{F}{p} \le C \Sobn{F}{\nu}^{a}\Lebn{F}{2}^{1-a}$ for $p \ge 2$ and $a \in [0,1)$ with $\frac{1}{p} = \frac{1}{2} - \frac{a \nu}{d}$. 
Hence,
\begin{equation}\label{eq:target}
\begin{aligned}
	&\norm{\int_t^{\I}U(t-s) \m{P}_d(s) ds}_{X_d} \\
	&= \norm{U(t)\F^{-1} \int_t^{\I}\F U(-s) \m{P}_d(s) ds}_{X_d} \\
	&= \norm{D(t)U\(-\frac{1}{4t}\) \int_t^{\I}\F U(-s) \m{P}_d(s) ds}_{X_d} \\
	&\le Ct^{ -\frac{1}{2}} \norm{\int_t^{\I}\F U(-s) \m{P}_d(s) ds}_{{H}^\nu}^{a} \norm{\int_t^{\I}\F U(-s) \m{P}_d(s) ds}_{L^2}^{1-a} 
\end{aligned}
\end{equation}
for $\nu=1/2a>1/2$. 
We fix $\nu$ so that
\begin{equation}\label{def:nu}
\frac12  < \nu <  \min \(\delta, 2-\delta\) .
\end{equation}
To choose such $\nu$, we need $\delta<3/2$.

By factorization of $U(t)$, we have 
\[
	\F U(-t) D(t) E^{\rho}(t) = i^{\frac{d}{2}} E^{1-\frac{1}{\rho}}(t)U\( \frac{\rho}{4t} \) D\(\frac{\rho}{2}\)
\] 
for $\rho \ne 0$ (see \cite{HWN}). Therefore, we further compute
\begin{align*}
	\F U(-s) \m{P}_d(s) &= \sum_{n \ne 0,1} g_n \F U(-s)\frac{1}{i s}D(s) E^{n}(s) \m{K}\phi_n (s) \\
	&= \sum_{n \ne 0,1} g_n s^{-1} i^{\frac{d}{2}-1} E^{1-\frac{1}{n}}(s) U\( \frac{n}{4s} \) D\(\frac{n}{2}\) \m{K} \phi_n(s).
\end{align*}
Now, we have $E^{1-\frac{1}{n}}(s) = A(s) \pa_s (s E^{1-\frac{1}{n}}(s))$ for $n\neq0,1$, where $A(s) = \( 1+\frac{i(1-\frac{1}{n})s}{2}|\xi|^2 \)^{-1}$.
Further,
\[
	\pa_{s}U \(\frac{n}{4s}\) = U\( \frac{n}{4s}\) \( \pa_s - \frac{i n}{4s^2} \D \).
\]
Therefore, an integration by parts gives us
\begin{align}
\begin{aligned}
	&\int_t^{\I} E^{1-\frac{1}{n}}(s) U\(\frac{n}{4s} \) D\(\frac{n}{2}\) \m{K}\f_n(s) \frac{ds}{s} \\
	&= -E^{1-\frac{1}{n}}(t) A(t) U\(\frac{n}{4s} \) D\(\frac{n}{2}\) \m{K}\f_n(t) \\
	&- \int_t^{\I} E^{1-\frac{1}{n}}(s) s \pa_s \(s^{-1}A(s)\) U\(\frac{n}{4s} \) D\(\frac{n}{2}\) \m{K}\f_n(s) ds \\
	&- \int_t^{\I} E^{1-\frac{1}{n}}(s) A(s) U\(\frac{n}{4s} \) \( \pa_s - \frac{i n}{4s^2} \D \) D\(\frac{n}{2}\) \m{K}\f_n(s) ds \\
	&=: I_1 +I_2 +I_3.
\end{aligned}
	\label{im1:1}
\end{align}

Thanks to \eqref{eq:target}, we shall estimate $I_j$ ($j=1,2,3$) in $L^2$ and ${H}^\nu$.
The following estimate is useful.
\begin{lemma} \label{useful:1:1}
Let $d/2 < \d < (d+1)/2$ and 
$\d<d/2+2\eta$.
Let $\nu$ satisfy either $\nu=0$ or
$1/2< \nu < \min (\delta, 2 -\delta)$. 
Let $\b = \max(1,\d)$ and let $m=1$, $2$. 
Then, it holds for any $t\ge1$ and $n\neq 0,1$ that
\begin{equation}\label{eq:useful}
\begin{aligned}
	&\Sobn{E^{1-\frac{1}{n}}(t) A^{m}(t) U\(\frac{n}{4t} \) D\(\frac{n}{2}\) \m{K}\f_n(t)}{\nu} \\
	&\quad\le C t^{\frac{\nu-\d}{2}}|n|^{-\d+\eta} \( \Sobn{\phi(t)}{\d} + \Lebn{|\xi|^{-\d}\f_n(t)}{2} \) \\
	&\quad\quad+C t^{\frac{\nu-\a}{2}} |n|^{-\a+\eta} \( \norm{\f_n(t)}_{H^{\d}} + \Lebn{|\xi|^{-\d}\f_n(t)}{2} \)^{1-\nu}  \norm{\f_n(t)}_{H^{\b}}^{\nu},
\end{aligned}
\end{equation}
where $\alpha = (1-\nu)\d + \nu \b$.
\end{lemma}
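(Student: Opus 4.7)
The strategy is to extract the decay $t^{-\delta/2}|n|^{-\delta}$ from the combination of the symbol $A^m$ and the dilation $D(n/2)$, and then reduce the remaining pieces to the quantities $\|\phi_n\|_{H^\delta}$ and $\||\xi|^{-\delta}\phi_n\|_{L^2}$, which are already controlled by Lemma \ref{lem1:2} and by the assumption $u_+\in \dot{H}^{-\delta}$.

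The first move is a dilation reshuffle. From the intertwining $D(\lambda)U(s)D(\lambda)^{-1}=U(s\lambda^2)$ with $\lambda=n/2$ and $s=1/(nt)$ one obtains $U(n/4t)D(n/2)=D(n/2)U(1/(nt))$; combining this with the conjugation $M_m D(\lambda)=D(\lambda)M_{m(\lambda\cdot)}$ for pointwise multipliers rewrites
\[
E^{1-\frac{1}{n}}(t)A^m(t)U\!\left(\tfrac{n}{4t}\right)D\!\left(\tfrac{n}{2}\right)\m{K}\phi_n(t) = D\!\left(\tfrac{n}{2}\right)\bigl[\tilde E(t)\tilde A^m(t)U\!\left(\tfrac{1}{nt}\right)\m{K}\phi_n(t)\bigr],
\]
where $\tilde A(t,\xi)=(1+i(n-1)nt|\xi|^2/8)^{-1}$ satisfies $|\tilde A(t,\xi)|\le C(1+c_n t|\xi|^2)^{-1}$ with $c_n\asymp n^2$ for $n\neq 0,1$, and $\tilde E$ is unimodular. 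Since $D(n/2)$ is an $L^2$-isometry and scales $\dot{H}^\nu$ by $|n|^{-\nu}$, the $H^\nu$-norm on the left reduces to the $L^2$- and $|n|^{-\nu}\dot{H}^\nu$-norms of $\tilde E\tilde A^m U(1/(nt))\m{K}\phi_n$.

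For the $L^2$ estimate, the natural way to capture the decay of $\tilde A^m$ is the pointwise bound $|\xi|^\delta|\tilde A^m|\le C(c_n t)^{-\delta/2}$, valid whenever $2m>\delta$ (so $m\in\{1,2\}$ suffices since $\delta<(d+1)/2\le 3/2$). Because $|\phi_n|=|\wha{w}|^{1+2/d}$ is nonzero at the origin in general, the weighted quantity $\||\xi|^{-\delta}\m{K}\phi_n\|_{L^2}$ may itself be infinite when $\delta>d/2$, so I would avoid a direct pointwise-multiplication estimate and instead perform a dyadic split in $\xi$. On $|\xi|\lesssim(c_n t)^{-1/2}$ the Sobolev embedding $H^\delta\hookrightarrow L^\infty$ (valid for $\delta>d/2$) bounds the piece by a constant times $(c_n t)^{-d/4}\|\phi_n\|_{H^\delta}$; on the complementary region the extra decay of $\tilde A^m$ permits one to transfer $|\xi|^{-2\delta}$ onto $h=U(1/(nt))\m{K}\phi_n$ and then, after commuting $U$ with $\m{K}$ (both are Fourier multipliers in the dual of $\xi$) and using the splittings $U(1/(nt))=I+(U(1/(nt))-I)$ and $\m{K}=I-(I-\m{K})$, to reroute the result through $\||\xi|^{-\delta}\phi_n\|_{L^2}$ plus an $\|\phi_n\|_{H^\delta}$-contribution via Lemma \ref{mol:1.1}(ii) and the symbol bound $|e^{i\tau|\eta|^2}-1|\lesssim(\tau|\eta|^2)^{\theta/2}$. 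Balancing the two regions produces the advertised $L^2$ bound $\lesssim t^{-\delta/2}|n|^{-\delta+\eta}(\|\phi_n\|_{H^\delta}+\||\xi|^{-\delta}\phi_n\|_{L^2})$, with the small $|n|^\eta$-overhead absorbed by the hypothesis $\delta-d/2<2\eta$.

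For the $\dot{H}^\nu$ piece with $\nu>0$ I would first establish a coarser $\dot{H}^\beta$-bound ($\beta=\max(1,\delta)$) by letting derivatives fall on the three factors: derivatives on $\tilde E$ contribute $(c_n t)^{1/2}|\xi|$ absorbed by raising $m$ in $\tilde A^m$, and derivatives on $\m{K}\phi_n$ pass onto $\|\phi_n\|_{\dot H^\beta}$ by Lemma \ref{mol:1.1}(i). Interpolation $\|f\|_{\dot H^\nu}\le\|f\|_{L^2}^{1-\nu/\beta}\|f\|_{\dot H^\beta}^{\nu/\beta}$ with the $L^2$ bound yields the second summand with exponent $\alpha=(1-\nu)\delta+\nu\beta$, while the $L^2$ bound itself contributes the first. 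The main obstacle is precisely the delicate low-$\xi$ issue noted above: because $\m{K}\phi_n(0)\neq 0$ in general, a naive $\||\xi|^\delta\tilde A^m\|_{L^\infty}$-trick fails, and the combination of the dyadic split, the $I-\m{K}$ splitting, and the gap $\delta-d/2<2\eta$ is essential to express the final bound using only $\||\xi|^{-\delta}\phi_n\|_{L^2}$.
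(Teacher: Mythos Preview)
Your overall architecture---extract decay from $A^m$, peel off $U-I$ and $\mathcal K-I$, and interpolate for the $\dot H^\nu$ piece---matches the paper's, but the specific implementation you propose for the $L^2$ estimate has a genuine gap in the time decay. After your dilation reshuffle, the low-frequency region $|\xi|\lesssim(c_n t)^{-1/2}$ has volume $\sim(c_n t)^{-d/2}$, and your Sobolev-embedding bound there gives only
\[
\|\tilde A^m U\mathcal K\phi_n\|_{L^2(|\xi|\lesssim(c_n t)^{-1/2})}\le C(c_n t)^{-d/4}\|\phi_n\|_{H^\delta}=C|n|^{-d/2}t^{-d/4}\|\phi_n\|_{H^\delta}.
\]
Since $\delta>d/2$, the power $t^{-d/4}$ is strictly slower than the required $t^{-\delta/2}$, so no ``balancing'' with the high-frequency piece can recover the correct bound; moreover, when $\eta<\delta-d/2$ (which the hypothesis $\eta>\frac12(\delta-\frac d2)$ allows) the $|n|$-power $|n|^{-d/2}$ is also worse than $|n|^{-\delta+\eta}$. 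The underlying problem is that on the low region you have discarded all decay coming from $\tilde A$ and kept only the volume factor.

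The paper avoids this by \emph{not} pulling $D(n/2)$ to the outside and by \emph{not} doing a dyadic split. Instead it keeps $A(t)$ at its original scale, writes $|A|\le B^2$ with $B=(1+t|\xi|^2)^{-1/2}$, and uses H\"older's inequality with $B^{2-\nu}\in L^{p}$ for a suitable \emph{finite} $p$ (possible because $\nu<2-\delta$). The $(U(\tfrac{n}{4t})-1)$ and $(\mathcal K-1)$ correction terms are then handled by Sobolev embedding $L^q\hookleftarrow \dot H^{d/p}$ combined with the Fourier-multiplier bounds, which together reproduce exactly the factor $t^{-\delta/2}$; the choice of $p$ close enough to $2$ is what makes \eqref{eq:worstpoint} work and is precisely where the hypothesis $\delta-d/2<2\eta$ enters. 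For the remaining ``main'' term the weight acts directly on $D(n/2)\phi_n$ (with $U$ and $\mathcal K$ already stripped off), so the pointwise bound $B^{2-\nu}\le Ct^{-\delta/2}|\xi|^{-\delta}$ and the scaling $\||\xi|^{-\delta}D(n/2)\phi_n\|_{L^2}=|n|^{-\delta}\||\xi|^{-\delta}\phi_n\|_{L^2}$ give the full decay with no low-$\xi$ obstruction. A secondary issue: for the $\dot H^\nu$ part you cannot ``raise $m$ in $\tilde A^m$'' since $m$ is fixed; the paper instead proves an explicit product-type bound (the estimate \eqref{eq:5.1_pf1}) that converts a derivative hitting $E^{1-1/n}$ into an extra factor of $t^{1/2}B^{-1}$, which the weight $B^{2-\nu}$ can absorb.
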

We postpone the proof of this lemma and continue the proof of Proposition \ref{non:im1}.
For simplicity, we consider the case $d=2$, in which case $\a = \b=\d$ in Lemma \ref{useful:1:1}.
Fix $\eta>\frac12 \(  \d -\frac{d}2\)$.
Using Lemma \ref{useful:1:1}, we obtain
\begin{align}
\begin{aligned}
	\Sobn{I_1}{\nu}&=\Sobn{E^{1-\frac{1}{n}}(t) A(t) U\(\frac{n}{4t} \) D\(\frac{n}{2}\) \m{K}\f_n(t)}{\nu} \\
	&\le C t^{\frac{\nu-\d}{2}}|n|^{-\d+\eta} \norm{\phi(t)}_{H^{\d} \cap H^{0, -\d}}.
\end{aligned}
\label{main:1:1}
\end{align}
Let us estimate $\Sobn{I_2}{\nu}$. By $\pa_s \(s^{-1}A(s)\) = -2s^{-2} A(s) + s^{-2} \(A(s)\)^2$ and Lemma \ref{useful:1:1}, we compute
\begin{align}
\begin{aligned}
	\Sobn{I_2}{\nu} &\le C \int_t^{\I} \Sobn{E^{1-\frac{1}{n}}(s) s^{-1}A(s) U\(\frac{n}{4s} \) D\(\frac{n}{2}\) \m{K}\f_n(s) }{\nu} ds \\
	&\quad + C\int_t^{\I} \Sobn{E^{1-\frac{1}{n}}(s) s^{-1}\(A(s)\)^{2} U\(\frac{n}{4s} \) D\(\frac{n}{2}\) \m{K}\f_n(s)}{\nu} ds \\
	&\le C|n|^{-\d+\eta} \int_t^{\I} s^{\frac{\nu-\d}{2}-1} \norm{\phi(s)}_{H^{\d} \cap H^{0, -\d}} ds.
\end{aligned}
\label{main:1:2}
\end{align}

Finally, we estimate $\Sobn{I_3}{\nu}$. We introduce the regularizing operators
$\m{K}_j := \m{K}_{\psi_j}$ ($j=1,2$) by \eqref{def:Kpsi} with
\begin{align*}
	&\psi_{1}(x) = -\frac{\s}2 x\cdot \nabla \psi_0 \in \Sch, &
	&\psi_{2}(x) = \frac{i}4  |x|^2 \psi_0(x) \in \Sch.
\end{align*}
We then have an identity
\begin{align*}
	\(\pa_s - \frac{in}{4s^2} \D \)D\(\frac{n}{2}\)\m{K}\f_n &= D\(\frac{n}{2}\)\m{K}\pa_s\f_n + s^{-1}D\(\frac{n}{2}\)\m{K}_1 \f_n \\
	&+ s^{\s-2} n D\(\frac{n}{2}\)\m{K}_2 \f_n.
\end{align*}

Since $\m{K}_1$ and $\m{K}_2$ of the form \eqref{def:Kpsi}, 
the estimate \eqref{eq:useful} is valid also for these regularizing operators.
Then, we have
\begin{align}
\begin{aligned}
	\Sobn{I_3}{\nu}
	&\le C|n|^{-\d+\eta} \int_t^{\I} s^{\frac{\nu-\d}{2}} \norm{\pa_s \phi_n(s)}_{H^{\d} \cap H^{0, -\d}} ds \\
	&+ C|n|^{-\d+\eta} \int_t^{\I} s^{\frac{\nu-\d}{2}-1}  \norm{\phi_n(s)}_{H^{\d} \cap H^{0, -\d}} ds \\
	&+ C|n|^{-\d+1+\eta} \int_t^{\I} s^{\frac{\nu-\d}{2}+\s-2}  \norm{\phi_n(s)}_{H^{\d} \cap H^{0, -\d}} ds.
\end{aligned}
\label{main:1:3}
\end{align}
By \eqref{main:1:1}, \eqref{main:1:2}, \eqref{main:1:3}, Lemma \ref{lem1:2} and the estimates
\begin{align*}
	\norm{\phi_n}_{H^{0,-\delta}}
	&\le C\norm{\wha{u_+}}_{L^\I}^{\frac2d} \norm{u_+}_{\dot{H}^{-\delta}}, \\
	\norm{\partial_t \phi_n}_{H^{0,-\delta}}
	&\le C\frac{|g_1|}{t}\norm{\wha{u_+}}_{L^\I}^{\frac4d} \norm{u_+}_{\dot{H}^{-\delta}},
\end{align*}
we find
\begin{multline*}
	\Sobn{\int_t^{\I} E^{1-\frac{1}{n}}(s) U\(\frac{n}{4s} \) D\(\frac{n}{2}\) \m{K}\f_n(s) \frac{ds}{s}}{\nu} \\
	\le C t^{\frac{\nu-\d}{2}+\s-1} |n|^{1+\eta} \norm{\wha{u_+}}_{L^\I}^{\frac2d} 
	\norm{u_+}_{\dot{H}^{-\d} \cap H^{0, d}}\Jbr{ g_1\norm{\wha{u_+}}_{L^\I}^{\frac2d} \log t}^\d \\
	\quad + C t^{\frac{\nu-\d}{2}} |n|^{1+\eta}|g_1| \norm{\wha{u_+}}_{L^\I}^{\frac4{d}} \norm{u_+}_{\dot{H}^{-\delta} \cap H^{0, d}}\Jbr{ g_1 \norm{\wha{u_+}}_{L^\I}^{\frac2{d}} \log t}^\d 
\end{multline*}

Thus, summing up with respect to $n$, we reach to the estimate
\begin{multline}\label{main:1:4}
	\Sobn{\int_t^{\I}U(t-s) \m{P}_d(s) ds}{\nu} \\
	\le Ct^{\frac{\nu-\d}{2}+\s-1} \Jbr{ g_1 \norm{\wha{u_+}}_{L^\I}^{\frac2{d}} \log t}^\d \Jbr{g_1\Lebn{\wha{u_+}}{\I}^{\frac{2}{d}}}\\
	\times   \Lebn{\wha{u_+}}{\I}^{\frac{2}{d}} \norm{u_+}_{\dot{H}^{-\delta} \cap H^{0, d}} \sum_{n \ne 0,1} |n|^{1+\eta} |g_n| .
\end{multline}
In a similar way, one sees that \eqref{main:1:4} holds true for $\nu=0$.
Therefore, in light of \eqref{eq:target}, we obtain
\begin{multline}\label{main:1:5}
	\norm{\int_t^{\I}U(t-s) \m{P}_d(s) ds}_{X_d} \\
	\le Ct^{-\frac54-\frac{\d}{2}+\s} \Jbr{ g_1 \norm{\wha{u_+}}_{L^\I}^{\frac2{d}} \log t}^\d \Jbr{g_1\Lebn{\wha{u_+}}{\I}^{\frac{2}{d}}}\\
	\times   \Lebn{\wha{u_+}}{\I}^{\frac{2}{d}} \norm{u_+}_{H^{0, \d} \cap \dot{H}^{-\d}} \sum_{n \ne 0,1} |n|^{1+\eta} |g_n| .
\end{multline}
By \eqref{main:1:4} with $\nu=0$ and \eqref{main:1:5}, we finally obtain
\begin{multline}\label{main:1:6}
	\norm{\int_t^{\I} U(t-s) \m{P}_{d} (s) ds}_{L^\I(T, \I; L^2)}+ \norm{\int_t^{\I} U(t-s) \m{P}_{d} (s) ds}_{L^4(T, \I; X_d)} \\
	\le CT^{-\frac{\d}{2}+\s-1} \Jbr{g_1 \norm{\wha{u_+}}_{L^\I}^{\frac2{d}} \log T}^\d \Jbr{g_1\Lebn{\wha{u_+}}{\I}^{\frac{2}{d}}}\\
	\times   \Lebn{\wha{u_+}}{\I}^{\frac{2}{d}} \norm{u_+}_{H^{0, \d} \cap \dot{H}^{-\d}} \sum_{n \ne 0,1} |n|^{1+\eta} |g_n|
\end{multline}
since $-\frac{\d}{2}+\s-1=-\gamma<0$.
The result follows from \eqref{non:n4} and \eqref{main:1:6}.

To complete the proof we prove Lemma \ref{useful:1:1}.

\begin{proof}[Proof of Lemma \ref{useful:1:1}]
It suffice to estimate $\dot{H}^\nu$ norm instead of $H^\nu$ norm because smaller $\nu$ gives better estimate
and because the case $\nu=0$ is included.
Further, we only treat the case $m=1$. 
We set $B = (1+t|\xi|^2)^{-\frac{1}{2}}$, which yields $|A(t)| \le CB^2$ for any $n\neq0,1$.
Since $\nu < 2-\d < 2-d/2$, we have $|\xi|^{\d}B^{2-\nu} \le Ct^{-\frac{\d}{2}}$
and $B^{2-\nu} \in L^2\cap L^\I(\R^d)$.
Set $\psi = U\(\frac{n}{4t} \) D\(\frac{n}{2}\) \m{K}\f_n(t)$.
By a standard argument, we have
\begin{equation}\label{eq:5.1_pf1}
	\norm{E^{1-\frac{1}{n}}(t)A(t)\psi}_{\dot{H}^{\nu}} \le C\norm{ |\pa|^{\nu} (A(t) \psi)}_{L^2} + Ct^{\frac{\nu}{2}}\norm{B^{2-\nu}\psi}_{L^2}.
\end{equation}

We first estimate the second term in \eqref{eq:5.1_pf1}. 
By the triangle inequality, 
\begin{align*}
	\Lebn{B^{2-\nu} U\(\frac{n}{4t} \) D\(\frac{n}{2}\) \m{K}\f_n(t)}{2} 
	\le{} &\Lebn{B^{2-\nu}\( U\(\frac{n}{4t} \) -1 \) D\(\frac{n}{2}\) \m{K}\f_n(t)}{2} \\
	&+ \Lebn{B^{2-\nu} D\(\frac{n}{2}\) \( \m{K}-1\) \f_n(t)}{2} \\
	&+ \Lebn{B^{2-\nu} D\(\frac{n}{2}\) \f_n(t)}{2}\\
	=:{} & \mathrm{I} + \mathrm{II}+ \mathrm{III}
\end{align*}
For any $p_1\in (2,\I]$, one sees from Sobolev embedding and Lemma \ref{mol:1.1} (i) that
\[
\begin{aligned}
	\Lebn{\mathrm{I}}{2}
	&\le C\Lebn{B^{2-\nu}}{p_1} \Lebn{|\nabla|^{\frac{d}{p_1}} \left| \frac{n|\n|^2}{t}\right|^{\frac12(\d-\frac{d}{p_1})} D\(\frac{n}{2}\)\m{K}\f_n(t)}{2} \\
	&\le Ct^{-\frac{\d}{2}} |n|^{-\d + (\frac{\d}2-\frac{d}{2p_1})} \norm{\f_n(t)}_{\dot{H}^{\d}}. 
\end{aligned}
\]
By definition of $\eta$, we are able to choose $p_1$ so that
\begin{equation}\label{eq:worstpoint}
	\frac{\d}2-\frac{d}{2p_1} < \eta.
\end{equation}
By Lemma \ref{mol:1.1} (ii), we estimate
\begin{align*}
	\Lebn{\mathrm{II}}{2} 
	&\le C\Lebn{B^{2-\nu}}{p_2} \Lebn{|\n|^{\frac{d}{p_2}}D\(\frac{n}{2}\)(\m{K}-1)\f_n(t)}{2} \\
	&\le Ct^{-\frac{d}{2p_2}} |n|^{-\frac{d}{p_2}} \Lebn{|\n|^{\frac{d}{p_2}}\(\m{K}-1\)\f_n(t)}{2} \\
	&\le Ct^{-\frac12(\frac{d}{p_2}- \t_2)} |n|^{-\frac{d}{p_2}- \t_2} \norm{\f_n(t)}_{\dot{H}^{\frac{d}{p_2}+\t_2}}
\end{align*}
for any $p_2\in (2,\I]$ and $\theta_2\in [0,1]$, where we have used the relation $\sigma\ge1$.
Taking $p_2$ and $\theta_2$ so that $\t_2 = \d-\frac{d}{p_2} \le1$, we obtain desired estimate for $\mathrm{II}$.
We can choose such $p_2$ and $\t_2$ because $\nu < 2-\delta$.
Next, we have
\begin{align*}
	&\Lebn{\mathrm{III}}{2} \le Ct^{-\frac{\d}{2}} \Lebn{|\xi|^{-\d}D\(\frac{n}{2}\) \f_n(t)}{2} \le Ct^{-\frac{\d}{2}} |n|^{-\d} \Lebn{|\xi|^{-\d}\f_n(t)}{2}.
\end{align*}
These estimates yield
\begin{equation}
t^{\frac{\nu}2}\Lebn{B^{2m-\nu} \psi}{2}
\le Ct^{\frac{\nu-\d}{2}}|n|^{-\d+\eta} \( \norm{\f_n(t)}_{H^{\d}} + \Lebn{|\xi|^{-\d}\f_n(t)}{2} \).
\label{est25:1}
\end{equation}

Let us move on to the estimate of the first term in \eqref{eq:5.1_pf1}. By interpolation inequality, 
\[
	\norm{ |\pa|^{\nu} (A(t) \psi)}_{L^2} \le \Lebn{A(t)\psi}{2}^{1-\nu}\Lebn{\n\(A(t)\psi\)}{2}^{\nu}.
\]
From $|\n A(t)| \le Ct^{1/2}B^2$ and the Leibniz rule, we have 
\[
	\Lebn{\n (A(t)\psi)}{2} \le Ct^{1/2}\Lebn{B^2\psi}{2} + \Lebn{B^2 \n \psi}{2}.
\] 
These implies that
\begin{align}
	\norm{ |\pa|^{\nu} (A(t) \psi)}_{L^2} \le Ct^{\frac{\nu}2}\Lebn{B^2\psi}{2} + C \Lebn{B^2\psi}{2}^{1-\nu} \Lebn{B^2 \n \psi}{2}^{\nu}. \label{com:2:1}
\end{align}
The estimate of $\Lebn{B^2\psi}{2}$ is the same as in \eqref{est25:1}.
To complete the proof, it then suffices to show that
\begin{align}
	\Lebn{B^2 \n \psi}{2} \le Ct^{\frac{1-\b}{2}}|n|^{-\b} \norm{\f_n(t)}_{\dot{H}^{\b}}, \label{com:2:2}
\end{align}
where $\b=1$ if $d=1$ and $\b=\d$ if $d=2$.

Let us show \eqref{com:2:2}. We estimate as
\begin{align*}
	\Lebn{B^{2}\n U\(\frac{n}{4t} \) D\(\frac{n}{2}\) \m{K}\f_n(t)}{2} 
	\le{} &\Lebn{B^{2}\n \( U\(\frac{n}{4t} \) -1 \) D\(\frac{n}{2}\) \m{K}\f_n(t)}{2} \\
	&+ \Lebn{B^{2}\n D\(\frac{n}{2}\) \( \m{K}-1\) \f_n(t)}{2} \\
	&+ \Lebn{B^{2}\n D\(\frac{n}{2}\) \f_n(t)}{2}\\
	=:{} & \mathrm{IV} + \mathrm{V}+ \mathrm{VI}.
\end{align*}
For any $p_{3} \in (4,\I]$, one sees from Sobolev embedding and Lemma \ref{mol:1.1} (i) that
\[
\begin{aligned}
	\Lebn{\mathrm{IV}}{2}
	&\le C\Lebn{B^{2}}{p_3} \Lebn{|\nabla|^{1+\frac{d}{p_3}} D\(\frac{n}{2}\)\m{K}\f_n(t)}{2} \\
	&\le Ct^{\frac{1-\b}{2}} |n|^{-\b} \norm{\f_n(t)}_{\dot{H}^{\b}}. 
\end{aligned}
\]
Here, we take $p_3$ so that $1+\frac{d}{p_3} = \b$.
By Lemma \ref{mol:1.1} (ii), we estimate
\begin{align*}
	\Lebn{\mathrm{V}}{2} 
	&\le C\Lebn{B^{2}}{p_4} \Lebn{|\n|^{1+\frac{d}{p_2}}D\(\frac{n}{2}\)(\m{K}-1)\f_n(t)}{2} \\
	&\le Ct^{-\frac{d}{2p_2}} |n|^{-1-\frac{d}{p_4}} \Lebn{|\n|^{1+\frac{d}{p_4}}\(\m{K}-1\)\f_n(t)}{2} \\
	&\le Ct^{\frac{1-\b}{2}} |n|^{-\b} \norm{\f_n(t)}_{\dot{H}^{\b}},
\end{align*}
where $\b = 1+ \frac{1}{p_4}$. Finally, from the Hardy inequality, we have
\begin{align*}
	\Lebn{\mathrm{VI}}{2} &\le Ct^{\frac{1-\b}{2}} \Lebn{|\xi|^{1-\d} |\n| D\(\frac{n}{2}\) \f_n(t)}{2} \\
	&\le Ct^{\frac{1-\b}{2}} |n|^{-\b} \Lebn{|\n|^{\b}\f_n(t)}{2}.
\end{align*}
By these estimates, we obtain \eqref{com:2:2}, which completes the proof of \eqref{eq:useful}.
\end{proof}

\section{Proof of main result}\label{sec:main_pf}

We are now in a position to prove our main result.

\begin{proof}[Proof of Theorem \ref{thm:main}]
Let $\eta>0$ and $\d\in(d/2,(d+1)/2)$ be as in the assumption. Then, we have the relation $\eta>\frac12(\d-d/2)$.
Take $b \in (d/4,\gamma)$.
By Lemma \ref{non1:g2}, Lemma \ref{non1:g1}, and Proposition \ref{non:im1}, we have
\begin{align}
\begin{aligned} 
&\norm{\P(v)}_{X_{d, T, b}} \\
&\le C_1\norm{g}_{\mathrm{Lip}} R \(R^{\frac{2}{d}}T^{\frac{1}{2}-\frac{2}{d}b}+\e^{\frac2d} \) \\
&+C_2(1+|g_1|)T^{b-\frac{\d}{2}}\Jbr{ g_1\e^{\frac{2}{d}} \log T }^\delta \Jbr{\e^{\frac{2}{d}}} \norm{u_+}_{H^{0,d}} \\
&+C_3T^{b-\gamma}\Jbr{g_1\e^{\frac{2}{d}}\log T}^{\d}\Jbr{g_1\e^{\frac{2}{d}}}  \eps^{\frac2d}\norm{u_+}_{\dot{H}^{-\d}\cap H^{0,d}}\sum_{n \ne 0,1}|n|^{1+\eta}|g_n| , 
\end{aligned} \label{est1:t1} 
\end{align}
for any $v \in X_{d, T, b, R}$, $R>0$, $T>T_0$ and $\e>0$.

We next see that 
\begin{equation}
d(\P(u), \P(v)) \le C_4 \norm{g}_{\mathrm{Lip}}\( R^\frac{2}{d} T^{\frac12 - \frac{2}{d}b} + \e^\frac{2}{d} \) d(u, v). \label{est1:t2}
\end{equation}
Indeed, by the integral equation of (NLS), we see that
\begin{align*}
\P(u) -\P(v) = i \int_t^{\I} U(t-s) \( F (u) - F (v)  \)(s) ds.
\end{align*}
We then find 
\begin{align*}
|F(u) - F(v)|
&\le C \norm{g}_{\mathrm{Lip}} \( |u|^{\frac{2}{d}} + |v|^{\frac{2}{d}} \)|u-v| \\
&\le C\norm{g}_{\mathrm{Lip}} \( |u-u_p|^{\frac{2}{d}} + |v-u_p|^{\frac{2}{d}} \)|u-v| \\
&\quad + \norm{g}_{\mathrm{Lip}} |u_p|^{\frac{2}{d}}|u-v|.
\end{align*}
The rest of the proof is similar to that of Lemma \ref{non1:g2}.

Choose $\e=\e(\norm{g}_{\mathrm{Lip}})$ so small that
\[
	C_1 \norm{g}_{\mathrm{Lip}} \eps^{\frac2d} \le \frac12,\quad
	C_4 \norm{g}_{\mathrm{Lip}} \eps^{\frac2d} \le \frac14.
\]
Set $R=1$. Then, for sufficiently large $T$, we obtain
\[
	\norm{\P(v)}_{X_{d, T, b}} <1=R
\]
and
\[
	d(\P(u), \P(v)) \le \frac12 d(u,v),
\]
which shows $\P: X_{d, T, b, 1} \rightarrow X_{d, T, b, 1}$ is a contraction mapping. 
Then, we obtain a unique solution $v(t)\in X_{d,t,b,1}$.
\end{proof}

\appendix
\section{Lipschitz continuity of $g_F$}\label{sec:A}

In this appendix we show the following.
\begin{lemma}
Let $F(u)$ satisfy \eqref{eq:cond1}. Let $g(\theta)$ be a corresponding periodic function given
by \eqref{eq:id1} and \eqref{eq:id2}. Then, the following two statements are equivalent:
\begin{enumerate}
\item $g(\theta)$ is Lipschitz continuous.
\item There exists $C>0$ such that 
\begin{equation}\label{eq:lip}
	|F(u)-F(v)| \le C(|u|^{2/d} + |v|^{2/d})|u-v|
\end{equation}
 for all $u,v \in \C$.
\end{enumerate}
Moreover, the constant $C$ depends only on the Lipschitz constant of $g$.
\end{lemma}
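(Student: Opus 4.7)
The plan is to exploit the polar representation $F(u) = |u|^{1+2/d} g(\arg u)$ (for $u\neq 0$) coming from \eqref{eq:id1}--\eqref{eq:id2}, reducing both implications to geometry on the unit circle combined with the homogeneity.

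For the easy direction (2)$\Rightarrow$(1), I would specialize \eqref{eq:lip} to $u=e^{i\alpha}$ and $v=e^{i\beta}$. Since $|u|=|v|=1$ and $F(e^{i\theta})=g(\theta)$ by \eqref{eq:id2}, this gives
\[
|g(\alpha)-g(\beta)|\le 2C\,|e^{i\alpha}-e^{i\beta}|\le 2C|\alpha-\beta|,
\]
so $g$ is Lipschitz with constant at most $2C$.

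For the main direction (1)$\Rightarrow$(2), I would write $u=re^{i\alpha}$ and $v=se^{i\beta}$ and assume by symmetry $r\ge s\ge 0$. Because $g$ is $2\pi$-periodic and Lipschitz, it is bounded by $M:=|g(0)|+\pi\|g\|_{\mathrm{Lip}}$. I would then split into two regimes. In the regime $s<r/2$ one has $|u-v|\ge r-s>r/2$, hence $r^{1+2/d}\le 2r^{2/d}|u-v|$, and the triangle inequality gives directly
\[
|F(u)-F(v)|\le 2M\,r^{1+2/d}\le C\,|u|^{2/d}|u-v|.
\]
In the regime $s\ge r/2$ I would introduce the intermediate point $w=re^{i\beta}$ and split
\[
|F(u)-F(v)|\le |F(u)-F(w)|+|F(w)-F(v)|.
\]
The second piece is $(r^{1+2/d}-s^{1+2/d})|g(\beta)|\le C M r^{2/d}(r-s)\le C\,|u|^{2/d}|u-v|$ since $|u-v|\ge |r-s|$. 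The first piece equals $r^{1+2/d}|g(\alpha)-g(\beta)|\le r^{1+2/d}\|g\|_{\mathrm{Lip}}\theta$, where $\theta=\min(|\alpha-\beta|,2\pi-|\alpha-\beta|)\in[0,\pi]$ is the arc distance.

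The key quantitative step, and the main obstacle, is converting this arc distance back to $|u-v|$ when $u$ and $v$ have noticeably different moduli. I would use the chord--arc identity
\[
|u-v|^2=(r-s)^2+4rs\sin^2(\theta/2)\ge \frac{4rs}{\pi^2}\theta^2,
\]
which yields $\theta\le \pi|u-v|/(2\sqrt{rs})$, and hence
\[
|F(u)-F(w)|\le C\|g\|_{\mathrm{Lip}}\,r^{2/d}\sqrt{r/s}\,|u-v|\le C'\|g\|_{\mathrm{Lip}}\,|u|^{2/d}|u-v|,
\]
using $r/s\le 2$ in this regime. Combining the two regimes completes (1)$\Rightarrow$(2), and the constant $C$ thus obtained depends on $g$ only through $\|g\|_{\mathrm{Lip}}$ (and the sup-norm bound $M$, which is itself controlled by $\|g\|_{\mathrm{Lip}}$ up to $|g(0)|$).
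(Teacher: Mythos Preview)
Your proof is correct and follows essentially the same strategy as the paper: both introduce the intermediate point $w=\frac{|u|}{|v|}v=re^{i\beta}$ and split $|F(u)-F(v)|$ into an angular piece $|F(u)-F(w)|$ and a radial piece $|F(w)-F(v)|$, handling the latter via $|r^{1+2/d}-s^{1+2/d}|\le C(r^{2/d}+s^{2/d})|r-s|$. The only difference is in the angular piece: the paper avoids your case split $s<r/2$ versus $s\ge r/2$ and the chord--arc estimate by using the direct inequality
\[
|u|\cdot\Bigl|\frac{u}{|u|}-\frac{v}{|v|}\Bigr|\le \frac{\bigl||v|u-|v|v\bigr|+\bigl||v|v-|u|v\bigr|}{|v|}\le 2|u-v|,
\]
which immediately gives $|F(u)-F(w)|=|u|^{1+2/d}|g(\alpha)-g(\beta)|\le C\|g\|_{\mathrm{Lip}}|u|^{2/d}|u-v|$ for all nonzero $u,v$. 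Your observation that the final constant also involves $\|g\|_{L^\infty}$ (via $|g(0)|$) applies equally to the paper's argument.
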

\begin{proof}
By \eqref{eq:id2}, it is easy to see that Lipschitz continuity of $g$ is equivalent to
existence of a constant $C$ such that
\begin{equation}\label{eq:lip1}
	|F(u)-F(v)| \le C |u-v|
\end{equation}
for all $u,v \in \C$ with $|u|=|v|=1$. Hence, (2)$\Rightarrow$(1) is obvious.

We will show that \eqref{eq:lip1} implies \eqref{eq:lip}. We may suppose that $u\neq0$ and $v\neq0$.
Otherwise \eqref{eq:lip} is immediate from \eqref{eq:cond1}. 
We have
\[
	|F(u)-F(v)| \le \abs{F(u) - F\(\frac{|u|}{|v|}v\)} + \abs{ F\(\frac{|u|}{|v|}v\) - F(v)}.
\]
By \eqref{eq:cond1} and \eqref{eq:lip1}, we have
\begin{align*}
	\abs{F(u) - F\(\frac{|u|}{|v|}v\)}
	&{}= |u|^{1+\frac2d}\abs{F\(\frac{u}{|u|}\) - F\(\frac{v}{|v|}\)} \\
	&{}\le C |u|^{1+\frac2d} \abs{ \frac{u}{|u|} - \frac{v}{|v|}} \\
	&{}\le C |u|^{1+\frac2d} \frac{||v|u-v|v||+||v|v-|u|v|}{|u||v|} \\
	&{}\le C |u|^{\frac2d} |u-v|.
\end{align*}
Again by \eqref{eq:cond1},
\begin{align*}
	\abs{ F\(\frac{|u|}{|v|}v\) - F(v)}
	&{}= \abs{F\(\frac{v}{|v|}\)} \abs{|u|^{1+\frac2d}-|v|^{1+\frac2d}} \\
	&{}\le C (|u|^{\frac2d} +|v|^{\frac2d}) |u-v|.
\end{align*}
Thus, we obtain \eqref{eq:lip}.
\end{proof}


\begin{bibdiv}
\begin{biblist}

\bib{B}{article}{
      author={Barab, Jacqueline~E.},
       title={Nonexistence of asymptotically free solutions for a nonlinear
  {S}chr\"odinger equation},
        date={1984},
        ISSN={0022-2488},
     journal={J. Math. Phys.},
      volume={25},
      number={11},
       pages={3270\ndash 3273},
         url={http://dx.doi.org/10.1063/1.526074},
      review={\MR{761850}},
}

\bib{GO}{article}{
      author={Ginibre, J.},
      author={Ozawa, T.},
       title={Long range scattering for nonlinear {S}chr\"odinger and {H}artree
  equations in space dimension {$n\geq 2$}},
        date={1993},
        ISSN={0010-3616},
     journal={Comm. Math. Phys.},
      volume={151},
      number={3},
       pages={619\ndash 645},
         url={http://projecteuclid.org/euclid.cmp/1104252243},
      review={\MR{1207269}},
}

\bib{HN02}{article}{
      author={Hayashi, Nakao},
      author={Naumkin, Pavel~I.},
       title={Large time behavior for the cubic nonlinear {S}chr\"odinger
  equation},
        date={2002},
        ISSN={0008-414X},
     journal={Canad. J. Math.},
      volume={54},
      number={5},
       pages={1065\ndash 1085},
         url={http://dx.doi.org/10.4153/CJM-2002-039-3},
      review={\MR{1924713}},
}

\bib{HN04}{article}{
      author={Hayashi, Nakao},
      author={Naumkin, Pavel~I.},
       title={On the asymptotics for cubic nonlinear {S}chr\"odinger
  equations},
        date={2004},
        ISSN={0278-1077},
     journal={Complex Var. Theory Appl.},
      volume={49},
      number={5},
       pages={339\ndash 373},
         url={http://dx.doi.org/10.1080/02781070410001710353},
      review={\MR{2073463}},
}

\bib{HN08}{article}{
      author={Hayashi, Nakao},
      author={Naumkin, Pavel~I.},
       title={Nongauge invariant cubic nonlinear {S}chr\"odinger equations},
        date={2008},
        ISSN={1941-3963},
     journal={Pac. J. Appl. Math.},
      volume={1},
      number={1},
       pages={1\ndash 16},
      review={\MR{2467127}},
}

\bib{HN11}{article}{
      author={Hayashi, Nakao},
      author={Naumkin, Pavel~I.},
       title={Global existence for the cubic nonlinear {S}chr\"odinger equation
  in lower order {S}obolev spaces},
        date={2011},
        ISSN={0893-4983},
     journal={Differential Integral Equations},
      volume={24},
      number={9-10},
       pages={801\ndash 828},
      review={\MR{2850366}},
}

\bib{HN15}{article}{
      author={Hayashi, Nakao},
      author={Naumkin, Pavel~I.},
       title={Logarithmic time decay for the cubic nonlinear {S}chr\"odinger
  equations},
        date={2015},
        ISSN={1073-7928},
     journal={Int. Math. Res. Not. IMRN},
      number={14},
       pages={5604\ndash 5643},
         url={http://dx.doi.org/10.1093/imrn/rnu102},
      review={\MR{3384451}},
}

\bib{HNST}{article}{
      author={Hayashi, Nakao},
      author={Naumkin, Pavel~I.},
      author={Shimomura, Akihiro},
      author={Tonegawa, Satoshi},
       title={Modified wave operators for nonlinear {S}chr\"odinger equations
  in one and two dimensions},
        date={2004},
        ISSN={1072-6691},
     journal={Electron. J. Differential Equations},
       pages={No. 62, 16 pp. (electronic)},
      review={\MR{2047418}},
}

\bib{HWN}{article}{
      author={Hayashi, Nakao},
      author={Wang, Huimei},
      author={Naumkin, Pavel~I.},
       title={Modified wave operators for nonlinear {S}chr\"odinger equations
  in lower order {S}obolev spaces},
        date={2011},
        ISSN={0219-8916},
     journal={J. Hyperbolic Differ. Equ.},
      volume={8},
      number={4},
       pages={759\ndash 775},
         url={http://dx.doi.org/10.1142/S0219891611002561},
      review={\MR{2864547}},
}

\bib{MM}{article}{
      author={Masaki, Satoshi},
      author={Miyazaki, Hayato},
       title={Global behavior of solutions to generalized {G}ross-{P}itaevskii
  equation},
        date={2016},
     journal={preprint},
     eprint={arXiv:1612.02738},
}

\bib{MS2}{article}{
      author={Masaki, Satoshi},
      author={Segata, Jun-ichi},
       title={Existence of a minimal non-scattering solution to the
  mass-subcritical generalized {K}orteweg-de {V}ries equation},
        date={2016},
     journal={preprint},
      eprint={arXiv:1602.05331},
}

\bib{MTT}{article}{
      author={Moriyama, Kazunori},
      author={Tonegawa, Satoshi},
      author={Tsutsumi, Yoshio},
       title={Wave operators for the nonlinear {S}chr\"odinger equation with a
  nonlinearity of low degree in one or two space dimensions},
        date={2003},
        ISSN={0219-1997},
     journal={Commun. Contemp. Math.},
      volume={5},
      number={6},
       pages={983\ndash 996},
         url={http://dx.doi.org/10.1142/S021919970300121X},
      review={\MR{2030566}},
}

\bib{N}{article}{
      author={Naumkin, P.~I.},
       title={The dissipative property of a cubic non-linear {S}chr\"odinger
  equation},
        date={2015},
        ISSN={0373-2436},
     journal={Izv. Ross. Akad. Nauk Ser. Mat.},
      volume={79},
      number={2},
       pages={137\ndash 166},
         url={http://dx.doi.org/10.4213/im8179},
      review={\MR{3352593}},
}

\bib{NS}{article}{
      author={Naumkin, Pavel~I.},
      author={S{\'a}nchez-Su{\'a}rez, Isahi},
       title={On the critical nongauge invariant nonlinear {S}chr\"odinger
  equation},
        date={2011},
        ISSN={1078-0947},
     journal={Discrete Contin. Dyn. Syst.},
      volume={30},
      number={3},
       pages={807\ndash 834},
         url={http://dx.doi.org/10.3934/dcds.2011.30.807},
      review={\MR{2784622}},
}

\bib{Oz}{article}{
      author={Ozawa, Tohru},
       title={Long range scattering for nonlinear {S}chr\"odinger equations in
  one space dimension},
        date={1991},
        ISSN={0010-3616},
     journal={Comm. Math. Phys.},
      volume={139},
      number={3},
       pages={479\ndash 493},
         url={http://projecteuclid.org/euclid.cmp/1104203467},
      review={\MR{1121130}},
}

\bib{MR3543568}{article}{
      author={Sagawa, Yuji},
      author={Sunagawa, Hideaki},
       title={The lifespan of small solutions to cubic derivative nonlinear
  {S}chr\"odinger equations in one space dimension},
        date={2016},
        ISSN={1078-0947},
     journal={Discrete Contin. Dyn. Syst.},
      volume={36},
      number={10},
       pages={5743\ndash 5761},
         url={http://dx.doi.org/10.3934/dcds.2016052},
      review={\MR{3543568}},
}

\bib{ShT}{article}{
      author={Shimomura, Akihiro},
      author={Tonegawa, Satoshi},
       title={Long-range scattering for nonlinear {S}chr\"odinger equations in
  one and two space dimensions},
        date={2004},
        ISSN={0893-4983},
     journal={Differential Integral Equations},
      volume={17},
      number={1-2},
       pages={127\ndash 150},
      review={\MR{2035499}},
}

\bib{St}{incollection}{
      author={Strauss, Walter},
       title={Nonlinear scattering theory},
        date={1974},
   booktitle={Scattering theory in mathematical physics},
      editor={Lavita, J.~A.},
      editor={Marchand, J.-P.},
   publisher={Reidel, Dordrecht, Holland},
       pages={53\ndash 78},
}

\bib{Sun}{article}{
      author={Sunagawa, Hideaki},
       title={Large time behavior of solutions to the {K}lein-{G}ordon equation
  with nonlinear dissipative terms},
        date={2006},
        ISSN={0025-5645},
     journal={J. Math. Soc. Japan},
      volume={58},
      number={2},
       pages={379\ndash 400},
         url={http://projecteuclid.org/euclid.jmsj/1149166781},
      review={\MR{2228565}},
}

\bib{TY}{article}{
      author={Tsutsumi, Yoshio},
      author={Yajima, Kenji},
       title={The asymptotic behavior of nonlinear {S}chr\"odinger equations},
        date={1984},
        ISSN={0273-0979},
     journal={Bull. Amer. Math. Soc. (N.S.)},
      volume={11},
      number={1},
       pages={186\ndash 188},
         url={http://dx.doi.org/10.1090/S0273-0979-1984-15263-7},
      review={\MR{741737}},
}

\end{biblist}
\end{bibdiv}

\end{document}